\numberwithin{equation}{section}
\newtheorem{thm}{Theorem}[section]
\newtheorem{prop}[thm]{Proposition}
\newtheorem{lem}[thm]{Lemma}
\newtheorem{cor}[thm]{Corollary}
\theoremstyle{definition}
\newtheorem{fact}[thm]{Fact}
\newtheorem{defn}[thm]{Definition}
\newtheorem*{defn*}{Definition}
\newtheorem{ex}[thm]{Example}
\newtheorem{que}[thm]{Question}
\theoremstyle{remark}
\newtheorem{rem}[thm]{Remark}
\newtheorem*{ack*}{Acknowledgment}
\newcommand{\Z}{\mathbb{Z}}
\newcommand{\ra}{\rightarrow}
\newcommand{\wt}{\tilde}
\newcommand{\B}{\mathcal{B}}
\newcommand{\C}{\mathscr{C}}
\newcommand{\A}{\mathcal{A}}
\newcommand{\piinv}{\pi^{-1}}
\newcommand{\tr}{\mathsf{T}}
\newcommand{\setN}{\mathbb{N}}
\newcommand{\setZ}{\mathbb{Z}}
\begin{document}

\title[Transition classes]{Structure of transition classes for factor codes on shifts of finite type}

\author[M. Allahbakhshi]{Mahsa Allahbakhshi}
\address{Centro de Modelamiento Matem\'atico \\
    Universidad de Chile \\
    Av. Blanco Encalada 2120, Piso 7 \\
    Santiago de Chile \\
    Chile}
\email{mallahbakhshi@dim.uchile.cl}

\author[S. Hong]{Soonjo Hong}
\address{Centro de Modelamiento Matem\'atico \\
    Universidad de Chile \\
    Av. Blanco Encalada 2120, Piso 7 \\
    Santiago de Chile \\
    Chile}
\email{hsoonjo@dim.uchile.cl}

\author[U. Jung]{Uijin Jung}
\address{Department of Mathematics \\
	Ajou University \\
    Suwon 443-749 \\
	South Korea}
\email{uijin@ajou.ac.kr}

\date{}
\subjclass[2010]{Primary 37B10; Secondary 37B40}
\keywords{class degree, transition class, shift of finite type, factor code}
\begin{abstract}
Given a factor code $\pi$ from a shift of finite type $X$ onto a sofic shift $Y$, the class degree of $\pi$ is defined to be the minimal number of transition classes over the points of $Y$. In this paper we investigate the structure of transition classes and present several dynamical properties analogous to the properties of fibers of finite-to-one factor codes. As a corollary, we show that for an irreducible factor triple there cannot be a transition between two distinct transition classes over a right transitive point, answering a question raised by Quas.
\end{abstract}
\maketitle

\section{Introduction}

Given a finite-to-one factor code $\pi$ from a shift of finite type $X$ onto an irreducible sofic shift $Y$, the \emph{degree} of $\pi$ is defined to be the minimal number of preimages of the points in $Y$. The notion of degree was first introduced in \cite{Hed69} for endomorphisms of full shifts, and then was extended to those of irreducible shifts of finite type and sofic shifts \cite{CovP74}. It is widely studied and is useful in the study of finite-to-one factor codes \cite{Ash90, Boy86, KitMT91, Tro90}.
If $d$ is the degree of a one-block factor code $\pi$,
there are well-known fundamental properties of fibers of points in $Y$ as follows:
\begin{enumerate}
    \item Every doubly transitive point in $Y$ has exactly $d$ preimages.
    \item $\pi(x)$ is doubly transitive if and only if so is $x$.
    \item Any two distinct preimages of a doubly transitive point in $Y$ are \emph{mutually separated}; i.e., they do not share a common symbol at the same time.
\end{enumerate}

In this work we show that a natural generalization of the degree for general factor codes is the \emph{class degree} introduced in~\cite{AllQ13}. The principal motivation of defining the class degree was finding a conjugacy-invariant upper bound on the number of ergodic measures of relative maximal entropy. Measures of relative maximal entropy appeared in many different topics in symbolic dynamics due to their connections with, for example, functions of Markov chains~\cite{Bla57,BurR58} or their application in computing the Hausdorff dimension of certain sets~\cite{GatP97}. The class degree is in fact a conjugacy-invariant upper bound on the number of such measures over a fully supported ergodic measure~\cite{AllQ13,PetQS03}.

The class degree is defined by using a certain equivalence relation on the fiber of each point in $Y$. Roughly speaking, two preimages $x$ and $\bar x$ of a point $y$ in $Y$ are equivalent if we can find a preimage $z$ of $y$ which is equal to $x$ up to an arbitrarily large given positive coordinate and right asymptotic to $\bar x$ and vice versa (see Definition~\ref{defn:equiv}). The \emph{class degree} is defined to be the minimal number of equivalence classes (called \emph{transition classes}) over the points in $Y$. It is shown in~\cite{AllQ13} that the class degree is equal to the degree when $\pi$ is finite-to-one and moreover, if $d$ is the class degree
of $\pi$ then every right transitive point in $Y$ has exactly $d$ transition classes (analogous to (1) above). This suggests that the class degree is a candidate of a generalization of the degree.

The idea of considering transitions between preimages of a point also appeared independently in \cite{Yoo10}. To find a condition which is invariant under conjugacy and weaker than the condition which appeared in \cite{ChaU03}, Yoo defined the notion of \emph{fiber-mixing} and showed that a fiber-mixing code between two mixing shifts of finite type sends every fully supported Markov measure on $X$ to a Gibbs measure on $Y$. Fiber-mixing codes were investigated in further studies, e.g., \cite{Jun13, Shi11}. In terms of our terminology, a fiber-mixing code from a shift of finite type $X$ onto a sofic $Y$ is just a code in which every point in $Y$ has only one transition class, thus it is natural to ask what kind of properties a code can have when the code is not fiber-mixing, for example, when the code has class degree one but there exist some points with more than one transition class. Moreover, since the definition of transition classes is motivated by communicating classes in Markov chains, Quas asked
whether there could be a transition between two distinct transition classes over a right transitive point.

To answer such questions one need to have a structural theory on transition classes. In fact unlike the finite-to-one case where the fibers have been well studied, previous research on infinite-to-one codes usually had concentrated on their thermodynamic formalism \cite{BoyP11,Wal86}, or on the construction of codes with nice properties \cite{Boy83, BoyT84, Tho04}. In this paper, we investigate the fibers and transition classes of such factor codes and provide several structural results. By these results it is natural to consider the class degree as a natural generalization of the degree.

In particular, we provide dynamical properties analogous to (2) and (3) above (see Theorem \ref{thm:contain_transition_pt}, Corollary \ref{cor:contain_dbly_transition_pt}, and Theorem \ref{thm:mutually_separated}):
\begin{enumerate}
    \item[(2$'$)] $y$ is right (resp. doubly) transitive if and only if each transition class over $y$ contains a right (resp. doubly) transitive point.
    \item[(3$'$)] Given two points from two distinct transition classes over a right transitive point, they are mutually separated.
\end{enumerate}

This analogy shows us that as for a finite-to-one code, fibers over almost all images for infinite-to-one factors are well-behaved, in the sense that among the fiber over a typical point of $Y$ a typical point of $X$ always exists, and that any points chosen from distinct classes over a typical point of $Y$ have orbits which neither meet nor approach asymptotically to each other. As a corollary, we also show that there cannot be any transition among distinct classes (see Corollary \ref{thm:no_transition}). Property (3$'$) above is one type of separation property between distinct transition classes. In Section 5, we present another type of separation property which is stronger than the former one; briefly, we make a partition on the set of preimages of a magic block of $\pi$ so that one can determine whether two preimages of a doubly transitive point are in the same transition class or not only by reading the symbols occurring in their coordinates over the magic block (see Theorem \ref{thm:block_partition}).
Other structural properties on transition classes will be also provided which would hopefully open new doors on further investigation of infinite-to-one factor codes.

\section{Background}
In this section we introduce some terminology and basic results on symbolic dynamics.

If $X$ is a subshift (or shift space) with the shift map $\sigma$, then denote by $\B_n(X)$ the set of all $n$-blocks occurring in the points of $X$ and let $\B(X) = \bigcup_{n = 0}^{\infty} \B_n(X)$. The alphabet of a shift space $X$ is denoted by $\A(X) = \B_1(X)$. 

A \emph{code} $\pi : X \to Y$ is a continuous $\sigma$-commuting map between shift spaces. It is called a \emph{factor code} if it is surjective. Every code can be recoded to be a one-block code; i.e., a code for which $x_0$ determines $\pi(x)_0$. Given a one-block code $\pi: X \to Y$, it naturally induces a map on $\B(X)$, which we also denote by $\pi$ for brevity. We call $\pi$ \emph{finite-to-one} if $\pi^{-1}(y)$ is a finite set for all $y \in Y$.

A triple $(X,Y,\pi)$ is called a \emph{factor triple} if $\pi:X\to Y$ is a factor code from a shift of finite type $X$ onto a (sofic) subshift $Y$. A factor triple is called irreducible when $X$ is irreducible. It is called finite-to-one if $\pi$ is finite-to-one.

A point $x$ in a shift space $X$ is called \emph{right transitive} if every block in $X$ occurs infinitely many times in $x_{[0,\infty)}$, or equivalently, if the forward orbit of $x$ is dense.  Two points $x, \bar x$ in $X$ are called \emph{right asymptotic} if $x_{[N,\infty)} =  \bar x_{[N,\infty)}$ for some $N \in \setZ$. Left asymptotic points and left transitivity are defined similarly. A point is \emph{doubly transitive} if it is both left and right transitive.

If $\pi$ is a finite-to-one factor code from a shift of finite type $X$ onto a sofic shift $Y$, there is a uniform upper bound on the number of preimages of points in $Y$ \cite{LM}. The minimal number of $\pi$-preimages of the points in $Y$ is called the \emph{degree} of the factor code $\pi$ and is denoted by $d_\pi$.

\begin{thm}\label{thm:samenumber}\cite[\S 9]{LM}
    Let $(X,Y,\pi)$ be a finite-to-one factor triple with $Y$ irreducible. Then every doubly transitive point of $Y$ has exactly $d_{\pi}$  preimages.
\end{thm}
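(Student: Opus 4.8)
The plan is to follow the classical \emph{magic word} argument. Since the degree and the notions of left/right transitivity are conjugacy invariants, I would first recode so that $\pi$ is a one-block code and $X$ is the edge shift of a finite essential graph $G$; then each preimage of a point $y\in Y$ is a bi-infinite path in $G$ carrying the label $y$. Because the fibre sizes $|\pi^{-1}(y)|$ are bounded positive integers, the minimum $d_\pi$ is attained, and $|\pi^{-1}(y)|\ge d_\pi$ holds for every $y$ by definition; so the whole content of the theorem is the reverse inequality $|\pi^{-1}(y)|\le d_\pi$ when $y$ is doubly transitive.

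The central tool is a magic word: a block $w\in\B(Y)$ together with a distinguished coordinate $a$ in it (the \emph{magic coordinate}) such that any two paths in $G$ labeled $w$ which agree in coordinate $a$ are equal, and such that the number of paths in $G$ labeled $w$ is exactly $d_\pi$. Its existence rests on the finite-to-one hypothesis in the form of the \emph{no-diamond} property --- $\pi$ is finite-to-one if and only if $G$ has no two distinct paths with the same label, the same initial state and the same terminal state --- together with irreducibility of $X$. One produces $w$ by choosing a pointed block $(u,i)$ (a block of $Y$ with a marked coordinate) that minimizes the number of symbols of $X$ occurring at coordinate $i$ among paths labeled $u$; minimality prevents this pointed block from being refined, and the no-diamond property then forces both the uniqueness statement at the magic coordinate and the equality of the count with $d_\pi$. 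Establishing this is the first, and most technical, step.

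Now let $y$ be doubly transitive. Then $w$ occurs in $y$ at a bi-infinite sequence of positions $\dots<p_{-1}<p_0<p_1<\dots$, unbounded on both sides, and I align the magic coordinates at $m_k:=p_k+a-1$. For $K$ large we have $|\pi^{-1}(y)|=|\Phi_K|$, where $\Phi_K$ is the set of paths in $G$ labeled $y_{[p_{-K},\,p_K+|w|-1]}$ that extend to a full preimage of $y$ (the restriction map $\pi^{-1}(y)\to\Phi_K$ is onto, and is one-to-one once $K$ is large enough to separate all preimages). Every element of $\Phi_K$ restricts, on each occurrence $[p_k,p_k+|w|-1]$, to a path labeled $w$, hence is pinned down there by its single symbol $s_k\in S$ at $m_k$, where $S$ is the $d_\pi$-element set of magic symbols; moreover two elements of $\Phi_K$ carrying the same symbol at every $m_k$ must coincide, for otherwise they would differ somewhere inside a gap $[p_k+|w|,\,p_{k+1}-1]$ while sharing the initial and terminal states of that gap --- a diamond, impossible. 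Thus an element of $\Phi_K$ is determined by its finite string of magic symbols $(s_{-K},\dots,s_K)$, the admissible strings being exactly those compatible with the Boolean matrices $A_k\in\{0,1\}^{S\times S}$ that record which magic symbols can be joined by a path across the $k$-th gap; and a counting argument on the products of the $A_k$ --- here one uses that $y$ is transitive on \emph{both} sides, so that every magic symbol is actually realized and no partial product degenerates --- shows that the number of admissible bi-infinite strings, hence $|\pi^{-1}(y)|$, equals $|S|=d_\pi$.

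The main obstacle lies entirely in the magic-word analysis, in two places. First, the construction of $w$ and the verification of its defining properties: this is where the finite-to-one hypothesis is genuinely used (through the absence of diamonds) and where irreducibility of $X$ is needed in order to realize the extremal pointed block. Second, and more subtly, upgrading the bound from ``$\le$ something depending only on $w$'' to the exact value $d_\pi$: a single occurrence of $w$ does not cap the fibre size --- the sets $\Phi_K$ only grow as the window grows --- so one truly needs $w$ to recur on both sides, and it is precisely the double (rather than merely one-sided) transitivity of $y$ that makes the product of the gap-transition matrices settle down to the right count.
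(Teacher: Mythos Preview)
The paper does not prove this theorem at all: it is quoted in \S2 as a background result from Lind--Marcus and no argument is given. So there is nothing in the paper to compare your proof against; your outline is indeed the classical magic-word argument found in \cite{LM}.

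That said, your sketch contains a genuine slip in the description of the magic word. You assert that for a magic block $w$ with magic coordinate $a$, ``any two paths in $G$ labeled $w$ which agree in coordinate $a$ are equal'' and hence ``the number of paths in $G$ labeled $w$ is exactly $d_\pi$'', and you say the no-diamond property forces this. It does not. No-diamond only says that two preimages of $w$ with the same \emph{initial} symbol, the same \emph{terminal} symbol, and the same label are equal; two preimages $u,u'$ with $u_a=u'_a$ but $u_0\neq u'_0$ (or $u_{|w|-1}\neq u'_{|w|-1}$) are not excluded. The standard magic-block property --- and the only one the paper states --- is that the set of \emph{symbols} $\{u_a:u\in\pi^{-1}(w)\}$ has exactly $d_\pi$ elements; there may well be more than $d_\pi$ preimages of $w$. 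Your argument as written needs this stronger ``resolving'' property, which is not available in general.

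The correct argument does not front-load the difficulty into the magic word. One only uses that the symbol set $S$ at the magic coordinate has size $d_\pi$ and that, by minimality, every $s\in S$ is realized at every occurrence $m_k$ by some full preimage of $y$. The substantive step --- which you correctly flag but do not carry out --- is then to show that the map $x\mapsto x_{m_0}$ from $\pi^{-1}(y)$ to $S$ is a bijection. Surjectivity is the minimality statement; injectivity is where the bi-infinite recurrence of $w$ and the no-diamond property are combined (two preimages agreeing at some $m_k$ are spliced and compared across \emph{all} the $m_j$, not just across a single occurrence of $w$). Your ``counting argument on products of the $A_k$'' gestures at this, but as written it rests on the mistaken resolving property of $w$ and would need to be redone.
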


Two points $x$ and $\bar x$ in a shift space are \emph{mutually separated} if $x_i$ and $\bar x_i$ are different for each integer $i$. It is well known that if $(X,Y,\pi)$ is a finite-to-one factor triple with $X$ one-step, $\pi$ one-block and $Y$ irreducible, then each $y \in Y$ has $d_\pi$ mutually separated preimages. In particular, if $y$ is doubly transitive, then any two distinct preimages of $y$ are mutually separated.

We say two factor triples $(X,Y,\pi)$ and $(\wt X,\wt Y,\wt\pi)$ are \emph{conjugate} if $X$ is conjugate to $\wt X$ under a conjugacy $\phi$, $Y$ is conjugate to $\wt Y$ under a conjugacy $\psi$, and $\wt\pi\circ\phi=\psi\circ\pi$. An immediate corollary of Theorem~\ref{thm:samenumber} is that $d_\pi$ is invariant under conjugacy.
%


\begin{defn}\cite{LM}
    Let $(X,Y,\pi)$ be a factor triple with $X$ one-step and $\pi$ one-block. Given a block $w\in\B(Y)$, define
        $$d(w) = \min_{1 \leq k \leq |w|} \left|  \{ a \in \A(X) : \exists u \in \pi^{-1}(w) \text{ with } u_k = a \} \right|$$

    If a block $w$ satisfies $d(w) = \min_{v \in \B(Y)} d(v)$, then it is called a \emph{magic block}. In this case, a coordinate $k$ where the minimum occurs is called a \emph{magic coordinate}. If $|w| = 1$ then $w$ is called a \emph{magic symbol}.
\end{defn}

Let $w$ be a magic block with a magic coordinate $k$. Given a point $y$ in $Y$ and $i \in \setZ$ with $y_{[i,i+|w|)}=w$, some block in $\pi^{-1}(w)$ may not be extendable to a point in $\piinv(y)$. However, due to the minimality of $d(w)$, the two sets $\{x_{i+k}\mid x\in\pi^{-1}(y)\}$ and $\{u_k\mid u\in\pi^{-1}(w)\}$ are the same.
It is well known that for a one-block finite-to-one factor code $\pi$ from a one-step shift of finite type $X$ onto an irreducible sofic shift $Y$, we have $d_\pi = d(w)$ for any magic block $w$ of $\pi$.


The class degree defined below is a quantity analogous to the degree when the factor code $\pi$ is not only limited to be finite-to-one.

\begin{defn}\label{defn:equiv}
    Let $(X,Y,\pi)$ be a factor triple and $x,\bar x \in X$. We say there is a \emph{transition} from $x$ to $\bar x$ and denote it by $x\to \bar x$ if for each integer $n$, there exists a point $z$ in $X$ so that
    \begin{enumerate}
        \item $\pi(z)=\pi(x)=\pi(\bar x)$, and
        \item $z_{(-\infty,n]}=x_{(-\infty,n]},\, z_{[i,\infty)}=\bar x_{[i,\infty)}$ for some $i\geq n$.
    \end{enumerate}
\end{defn}

We write $x\sim \bar x$ and say $x$ and $\bar x$ are in the same \emph{transition class} if $x\ra \bar x$ and $\bar x\ra x$. Then the relation $\sim$ is an equivalence relation. Denote the set of transition classes in $X$ over $y\in Y$ by $\mathscr{C}(y)$. We say there is a transition from a class $[x]$ to another class $[\bar x]$ and denote it by $[x]\ra [\bar x]$ if $x\ra \bar x$. Note that if $[x]\ra [\bar x]$ then for each $z \sim x$ and $\bar z \sim \bar x$ we have $z \ra \bar z$.

\begin{fact}\cite{AllQ13}
Let $\pi:X\ra Y$ be a one-block factor code from a one-step shift of finite type $X$ onto an irreducible sofic shift $Y$. Then
   \begin{enumerate}
       \item $|\mathscr{C}(y)|<\infty$ for each $y$ in $Y$.
       \item Let $x,x'\in \piinv(y)$ for some $y\in Y$. Given $x_{a_i}=x'_{a_i}$ where
       $(a_i)_{i\in\mathbb N}$ is a strictly increasing sequence in $\Z$, we have $x\sim x'$.
    \end{enumerate}
\end{fact}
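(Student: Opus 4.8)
The plan is to establish part (2) first, by an explicit splicing construction, and then to deduce part (1) from (2) by a pigeonhole argument; in particular no direct analysis of the global transition structure is needed.

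\emph{Part (2).} Suppose $x,x'\in\piinv(y)$ satisfy $x_{a_i}=x'_{a_i}$ for all $i\in\setN$, where $a_1<a_2<\cdots$ (so $a_i\to\infty$, the $a_i$ being distinct integers increasing in $i$). I would show $x\ra x'$; the reverse transition $x'\ra x$ then follows by the symmetric argument, giving $x\sim x'$. Fix $n\in\setZ$ and pick $j$ with $a_j\ge n$. Let $z$ be the sequence with $z_{(-\infty,a_j]}=x_{(-\infty,a_j]}$ and $z_{[a_j,\infty)}=x'_{[a_j,\infty)}$; this is well defined since $x_{a_j}=x'_{a_j}$. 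Because $X$ is one-step and every pair $z_kz_{k+1}$ equals either $x_kx_{k+1}$ (for $k<a_j$) or $x'_kx'_{k+1}$ (for $k\ge a_j$), all such pairs lie in $\B_2(X)$, so $z\in X$. Because $\pi$ is one-block, $\pi(z)_k=\pi(z_k)$ equals $y_k$ for every $k$ (using $z_k=x_k$ when $k\le a_j$ and $z_k=x'_k$ when $k\ge a_j$), so $\pi(z)=\pi(x)=\pi(x')=y$. Finally $z_{(-\infty,n]}=x_{(-\infty,n]}$ and $z_{[a_j,\infty)}=x'_{[a_j,\infty)}$ with $a_j\ge n$, which is precisely condition (2) of Definition~\ref{defn:equiv}; as $n$ was arbitrary, $x\ra x'$.

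\emph{Part (1).} Fix $y\in Y$. I would argue by contradiction, assuming $|\mathscr{C}(y)|>|\A(X)|$ and choosing preimages $x^{(0)},\dots,x^{(m)}\in\piinv(y)$, one from each of $m+1$ distinct transition classes, where $m=|\A(X)|$. For each $n\in\setN$ the $m+1$ symbols $x^{(0)}_n,\dots,x^{(m)}_n$ all lie in $\A(X)$, so by the pigeonhole principle there are indices $i(n)\ne j(n)$ with $x^{(i(n))}_n=x^{(j(n))}_n$. Since there are only finitely many unordered pairs of indices, some pair $\{i,j\}$ occurs as $\{i(n),j(n)\}$ for infinitely many $n$; listing those $n$ in increasing order yields a strictly increasing sequence $(a_k)_{k\in\setN}$ with $x^{(i)}_{a_k}=x^{(j)}_{a_k}$ for all $k$. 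By part (2) this forces $x^{(i)}\sim x^{(j)}$, contradicting that $x^{(i)}$ and $x^{(j)}$ lie in distinct transition classes. Hence $|\mathscr{C}(y)|\le|\A(X)|<\infty$.

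I do not expect a deep obstacle here; the one point requiring care is the verification in part (2) that the spliced sequence $z$ is a genuine point of $X$ with $\pi(z)=y$. This is exactly where the hypotheses that $X$ is one-step and that $\pi$ is one-block are used, and it is what makes ``gluing two preimages at a shared coordinate'' legitimate. Once (2) is in hand, part (1)---which at first glance looks as though it might require structural control over how transitions propagate---collapses to the short counting argument above, and in fact yields the explicit bound $|\mathscr{C}(y)|\le|\A(X)|$.
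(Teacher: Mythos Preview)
Your argument is correct. The paper does not supply its own proof of this statement: it is recorded as a \emph{Fact} with a citation to \cite{AllQ13}, so there is no in-paper proof to compare against. Your approach---first the direct splicing argument for (2), then the pigeonhole reduction for (1)---is exactly the natural elementary route, and it has the pleasant side effect of yielding the explicit bound $|\mathscr{C}(y)|\le|\A(X)|$, which the paper does not state.
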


\begin{defn}\label{defn:class_degree}
    Let $(X,Y,\pi)$ be a factor triple. The minimal number of transition classes over points of $Y$ is called the \emph{class degree} of $\pi$ and is denoted by $c_\pi$.
\end{defn}

It is clear that $c_\pi$ is invariant under conjugacy. It was shown in \cite{AllQ13} that for a finite-to-one factor triple $(X,Y,\pi)$ with $Y$ irreducible, we have $c_\pi=d_\pi$.

\begin{thm}\label{thm:classdegree}\cite{AllQ13}
    Let $(X,Y,\pi)$ be a factor triple with $Y$ irreducible. Then every right transitive point of $Y$ has exactly $c_{\pi}$ transition classes.
\end{thm}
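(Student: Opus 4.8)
We may assume throughout, after passing to a conjugate triple, that $X$ is one-step and $\pi$ is one-block; this is harmless because $c_\pi$ and all transition classes are conjugacy invariants. Fix a right transitive point $y\in Y$. One half of the assertion is immediate: $|\mathscr{C}(y)|\ge c_\pi$ for every point of $Y$, by the very definition of $c_\pi$ as a minimum. So the whole task is the reverse bound $|\mathscr{C}(y)|\le c_\pi$, and the natural route is to prove the stronger \emph{local} statement that some fixed block $w_0\in\B(Y)$ already controls the count, in the sense that $|\mathscr{C}(y')|\le c_\pi$ for every $y'\in Y$ in which $w_0$ occurs cofinally in the forward direction. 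Since $Y$ is irreducible, $w_0\in\B(Y)$ occurs infinitely often in $y_{[0,\infty)}$, so this local statement applies to our $y$ and, combined with the trivial inequality, yields $|\mathscr{C}(y)|=c_\pi$.

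To isolate such a $w_0$ I would build a transition-class analogue of the magic word. For a block $w\in\B(Y)$ and a coordinate $k$ of $w$, form the set $R_k(w)\subseteq\A(X)$ of symbols occurring at coordinate $k$ in some preimage of $w$, and equip it with the equivalence relation generated by the within-block analogue of Definition~\ref{defn:equiv}: two such symbols are related when one block-preimage of $w$ can be cut at coordinate $k$ and reglued to another — legal in a one-step $X$, where two legal blocks sharing a symbol may be concatenated at that symbol. Write $c(w,k)$ for the number of classes and $c=\min_{w,k}c(w,k)$. In analogy with the behaviour of $d(\cdot)$, the two facts one expects are: (a) enlarging $w$ can only enlarge this relation and shrink $R_k$, hence cannot raise $c(w,\cdot)$ at the inherited coordinate, so for every $y'$ the quantities $c(y'_{[-n,n]},\cdot)$ are eventually constant with limit $\ge c$; and (b) that eventual value is exactly $|\mathscr{C}(y')|$. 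Granting (a) and (b), one gets $c_\pi=\min_{y'}|\mathscr{C}(y')|=c$ (the inequality $c_\pi\le c$ coming from extending $w_0$ to a doubly transitive point of the irreducible $Y$), with $c$ attained at an explicit pair $(w_0,k_0)$.

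The proof of (b) is where the two properties of transition classes recalled in the Fact above do the work. For the inequality $|\mathscr{C}(y')|\le c(w_0,k_0)$: if two preimages $x,\bar x$ of $y'$ land on the same $R_{k_0}(w_0)$-class at every sufficiently late occurrence of $w_0$ in $y'$ — and Definition~\ref{defn:equiv} only ever requires switching at arbitrarily late coordinates, so late occurrences are exactly what matters — then $x$ and $\bar x$ agree at infinitely many coordinates and hence lie in the same transition class, by the second item of that Fact. For the reverse inequality one uses the first item: with only finitely many transition classes over $y'$, any two of them can be separated outside a finite window, so distinct classes cannot be collapsed at the distinguished coordinate of a long enough central block. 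Together these identify the eventual value of $c(y'_{[-n,n]},\cdot)$ with $|\mathscr{C}(y')|$, and the proof concludes as in the first paragraph.

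The step I expect to be genuinely delicate is the construction underlying (b): one must show that for a suitable $(w_0,k_0)$ the classes of $R_{k_0}(w_0)$ correspond bijectively to the transition classes over any point carrying $w_0$ cofinally forward, and in particular that two preimages lying in the same local class at one late occurrence of $w_0$ must do so at every later occurrence — the exact analogue of distinct preimages of a doubly transitive point remaining separated at every magic coordinate (Theorem~\ref{thm:samenumber}). Establishing this propagation while honouring the quantifier structure of Definition~\ref{defn:equiv}, where each prescribed bound $n$ is met by a fresh witness $z$ that switches at a coordinate only known to lie beyond $n$, is the technical core; the remaining bookkeeping with occurrences of $w_0$ and gluings in the one-step shift $X$ is then routine.
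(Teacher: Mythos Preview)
The paper does not supply its own proof of this theorem; it is quoted from \cite{AllQ13}, together with the companion identity $c_\pi=c^*_\pi$ (Theorem~\ref{thm:con}) and the apparatus of minimal transition blocks (Definitions~\ref{defn:TB} and~\ref{defn:cStar}). So there is no in-paper argument to compare against directly, only that surrounding framework.

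Your overall strategy --- manufacture a block-level invariant $c$ and show it coincides with $c_\pi$ and bounds $|\mathscr{C}(y)|$ for right transitive $y$ --- is the one encoded in that framework: your $(w_0,k_0)$ plays the role of a minimal transition block $(w,n,M)$ and your $c$ is $c^*_\pi$. But there is a concrete error in your argument for~(b). You assert that if $x,\bar x$ fall in the same $R_{k_0}(w_0)$-class at every late occurrence of $w_0$, then ``$x$ and $\bar x$ agree at infinitely many coordinates''. They need not: lying in the same local class (under any reasonable reading of your ``cut and reglue'' relation) means only that each can be \emph{rerouted} through a common symbol at that coordinate, not that $x_{i+k_0}=\bar x_{i+k_0}$. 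The correct step is to build the transition witness $z$ directly: at an occurrence beyond the target $N$, reroute $x$ and $\bar x$ through a common symbol and splice, obtaining $z$ equal to $x$ on $(-\infty,N]$ and right asymptotic to $\bar x$. The Fact you invoke is a consequence of this splicing mechanism, not a substitute for it.

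You are also making the inequality $|\mathscr{C}(y)|\le c$ harder than necessary. You aim for a bijection between local classes and global transition classes, which forces you into the ``propagation'' problem you flag as delicate and leave open. For the bare inequality a pigeonhole suffices: given $c+1$ pairwise inequivalent preimages, at each occurrence of $w_0$ two of them are routable through a common symbol; over infinitely many occurrences some fixed pair does so infinitely often, and the splicing above makes that pair equivalent. No propagation is required. The finer bijective statement you are reaching for is true --- it is essentially Lemma~\ref{lem:unique_routability} and Lemma~\ref{lem:same_member} of the present paper --- but in this paper's logical order those are proved \emph{downstream} of Theorem~\ref{thm:classdegree}, using it as input, so they cannot serve as a route to it.
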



Theorem \ref{thm:con} states the class degree of a factor code in terms of another quantity which is defined concretely in terms of blocks.
\begin{defn}\label{defn:TB}
    Let $(X,Y,\pi)$ be a factor triple with $X$ one-step and $\pi$ one-block. Let $w = w_{[0,p]} \in \B_{p+1}(Y)$. Also let $n$ be an integer in $(0,p)$ and $M$ be a subset of $\pi^{-1}(w_n)$.
    We say a block $u \in \piinv(w)$ is \emph{routable through} $a\in M$ \emph{at time} $n$ if there is a block $\bar u \in \piinv(w)$ with $\bar u_0=u_0, \bar u_{p}=u_{p}$ and $\bar u_{n}=a$. A triple $(w,n,M)$ is called a \emph{transition block} of $\pi$ if every block in $\piinv(w)$ is routable through a symbol of $M$ at time $n$. The cardinality of the set $M$ is called the \emph{depth} of the transition block $(w,n,M)$. When there is no confusion, for example when $y\in Y$ and $w=y_{[i,i+p]}$ are fixed, we say the points $x, \bar x\in\piinv(y)$ are routable through $a\in M$ \emph{at time} $i+n$ if $x_{[i,i+p]}$ and $\bar x_{[i,i+p]}$ are routable through $a$ \emph{at time} $i+n$.
\end{defn}

\begin{defn}\label{defn:cStar}
    Let $$c^*_{\pi}=\min\{|M|\colon (w,n,M)\textrm{ is a transition block of $\pi$}\}.$$
    A \emph{minimal transition block} of $\pi$ is a transition block of depth $c^*_\pi$.
\end{defn}

\begin{thm}\label{thm:con}\cite{AllQ13}
    Let $(X,Y,\pi)$ be a factor triple with $X$ one-step, $\pi$ one-block and $Y$ irreducible. Then $c_\pi=c^*_\pi$.
\end{thm}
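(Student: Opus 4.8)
The plan is to establish the two inequalities $c_\pi \le c^*_\pi$ and $c_\pi \ge c^*_\pi$ by separate arguments. For the first I would start from a minimal transition block and bound the number of transition classes over a right transitive point of $Y$; for the second I would start from a right transitive point realizing the class degree and extract from it a transition block of depth at most $c_\pi$.

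For $c_\pi \le c^*_\pi$, fix a minimal transition block $(w,n,M)$ with $w = w_{[0,p]}$ and $|M| = c^*_\pi$, and pick a right transitive $y \in Y$, so that $w$ occurs in $y$ at positions $i_1 < i_2 < \cdots \to \infty$. The engine of the argument is a splicing construction: if $x,\bar x \in \piinv(y)$ are routable through a common symbol $\alpha \in M$ at a common time $i_m + n$, then --- following $x$ up to coordinate $i_m$, inserting a block of $\piinv(w)$ that passes through $\alpha$ at coordinate $i_m + n$, and switching to $\bar x$ from coordinate $i_m + p$ on --- one builds a point $z \in \piinv(y)$ with $z_{(-\infty, i_m]} = x_{(-\infty, i_m]}$ and $z_{[i_m + p, \infty)} = \bar x_{[i_m + p, \infty)}$; the one-step property of $X$ is what makes $z$ admissible. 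Letting $m \to \infty$ yields $x \to \bar x$, and symmetrically $\bar x \to x$, so $x \sim \bar x$. Now if $y$ had at least $c^*_\pi + 1$ classes, choose representatives $x^{(1)}, \dots, x^{(c^*_\pi+1)}$; at each $i_m$ every $x^{(a)}$ is routable through some symbol of $M$ (by the transition-block property), so since $|M| = c^*_\pi$ two of the representatives are routable through a common symbol of $M$ at time $i_m + n$; as there are finitely many pairs and finitely many symbols, one pair and one symbol recur for infinitely many $m$, and the splicing construction then forces those two representatives into the same class, a contradiction. Hence $c_\pi \le |\mathscr{C}(y)| \le c^*_\pi$.

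For $c_\pi \ge c^*_\pi$, I would use Theorem \ref{thm:classdegree} to take a doubly transitive $y \in Y$ (which has exactly $c_\pi$ transition classes), fix class representatives $x^{(1)}, \dots, x^{(c_\pi)}$, and try to manufacture a transition block $(w,n,M)$ with $|M| \le c_\pi$, the natural candidate for $M$ being the set of symbols of the $x^{(j)}$ at a single well-chosen coordinate $n$ inside a long central window $w$ of $y$. The reason one expects such a block is that, since each preimage $x$ of $y$ satisfies $x \sim x^{(j)}$ for some $j$, the analogue of the splicing construction lets one reroute $x$ across $w$ so that it runs along the orbit of that $x^{(j)}$ in the middle. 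The hardest part --- and where the real work of the theorem lies --- is twofold. First, the definition of transition only provides a rerouting at \emph{some} coordinate, not at a prescribed one, so to pin a single coordinate $n$ at which \emph{all} preimages of $y$ funnel through the $\le c_\pi$ symbols of $M$ I would exploit the recurrence of the window $w$ (double transitivity of $y$) together with a pigeonhole over its recurrences. Second, a transition block demands that \emph{every} block of $\piinv(w)$, not only those extending to preimages of $y$, be routable through $M$; to control the non-extendable blocks I would use the one-step property of $X$ together with the fact that, by double transitivity, $w$ recurs in $y$ with arbitrarily long admissible pasts and futures, so that such blocks can be reduced to the extendable case. Producing a transition block of depth $\le c_\pi$ in this way gives $c^*_\pi \le c_\pi$ and completes the proof.
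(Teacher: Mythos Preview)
The paper does not give its own proof of this theorem: it is quoted from \cite{AllQ13} and stated without argument, so there is no in-paper proof to compare against. I will therefore comment on your proposal directly.

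Your first inequality $c_\pi \le c^*_\pi$ is correct and is the standard argument: the one-step hypothesis makes the spliced point admissible, and the double pigeonhole (over pairs of representatives and over symbols of $M$) across the infinitely many occurrences of $w$ in a right transitive $y$ is exactly what forces two of the $c^*_\pi+1$ representatives into the same class.

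The second inequality is where the content lies, and your sketch does not yet close the gap you yourself flag. The concrete obstruction is this: from $x \sim x^{(j)}$ you obtain, for each $N$, a point agreeing with $x$ on $(-\infty,N]$ and with $x^{(j)}$ on $[i,\infty)$ for \emph{some} $i \ge N$, with no control on $i-N$. To route a block $u \in \piinv(w)$ through the symbol $x^{(j)}_n$ while \emph{keeping both endpoints of $u$ fixed}, you need two such transitions (into $x^{(j)}$ and back out to $x$) to fit entirely inside the window $[0,|w|)$. Nothing in your outline supplies a uniform bound on the combined length of these reroutings that is independent of $u$; ``pigeonhole over recurrences of $w$'' does not help, because enlarging $w$ simultaneously enlarges the set $\piinv(w)$ you must control. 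The ``non-extendable blocks'' issue you raise is secondary: once extendable blocks are handled, the one-step property and a standard argument using recurrence of $w$ in a doubly transitive $y$ do take care of them. The primary missing ingredient is the uniformity of the rerouting length, and the proof in \cite{AllQ13} supplies this by a separate combinatorial argument on blocks rather than by reading it off the class structure over a single point.
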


For more details on symbolic dynamics, see \cite{LM}. For a perspective on the class degree and its relation to the degree, see \cite{AllQ13}.

\vspace{0.3cm}

\section{Each transition class over a right transitive point \\ contains a right transitive point}
\newcommand{\X}{\mathsf{X}}

In this section, we prove that given an irreducible factor triple $(X,Y,\pi)$, each transition class over a right transitive point contains a right transitive point. This result can be seen as an analogue of the well-known fact that for a finite-to-one irreducible factor triple $(X,Y,\pi)$, every preimage of a right (resp. doubly) transitive point is a right (resp. doubly) transitive point. We begin with the following definition.

\begin{defn}\label{defn:(X,v)_diamond}
    Let $(X,Y,\pi)$ be a factor triple and let $\bar X$ be a proper subshift of $X$ with $\pi(\bar X)=Y$. Let $\bar v$ be in $\B(X) \setminus \B(\bar X)$. We say two blocks $u$ and $v$ in $\B(X)$ form an \emph{$(\bar X,\bar v)$-diamond} if the following hold:
    \begin{enumerate}
        \item $\pi(u)=\pi(v)$,
        \item $\bar v$ is a subblock of $v$,
        \item $u$ occurs in $\bar X$, and
        \item $u$ and $v$ share the same initial symbol and the same terminal symbol.

\end{enumerate}
\end{defn}

The following lemma is a slightly stronger version of \cite[Proposition 3.1]{Yoo11}. We include a different proof here, which is also more direct than the original one. Later in \S \ref{sec:applications} a further strengthened version of Lemma \ref{lem:modified_yoo} will be provided.
\begin{lem}\label{lem:modified_yoo}
    Let $(X,Y,\pi)$ be an irreducible factor triple with $X$ one-step and $\pi$ one-block. Let $\bar X$ be a proper subshift of $X$ with $\pi(\bar X)=Y$. Then for each block $\bar v$ in  $\B(X) \setminus \B(\bar X)$ there is an $(\bar X,\bar v)$-diamond.
\end{lem}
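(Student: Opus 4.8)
Write $\ell=|\bar v|$, $\bar v=\bar v_0\cdots\bar v_{\ell-1}$, and $V=\pi(\bar v)\in\B(Y)$. The object to produce is a block $v\in\B(X)$ in which $\bar v$ occurs, together with a block $u\in\B(\bar X)$ satisfying $\pi(u)=\pi(v)$ and sharing with $v$ its initial and its terminal symbol; such a pair is by definition an $(\bar X,\bar v)$-diamond. The plan is to obtain the diamond as a pair of finite windows of two points of $X$ lying over one and the same transitive point of $Y$.

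First I would set up the two points. Since $X$ is irreducible and one-step, doubly transitive points are dense in $X$, while the set of points of $X$ in which $\bar v$ occurs is nonempty, open and $\sigma$-invariant; hence I may fix a doubly transitive $x\in X$ with $\bar v=x_{[0,\ell-1]}$, so that $y:=\pi(x)$ is doubly transitive in $Y$. Because $\pi(\bar X)=Y$ there is $\bar x\in\bar X$ with $\pi(\bar x)=y$; fix one. (Note $\bar x\ne x$ since $\bar v\notin\B(\bar X)$; in fact $\bar X$, being a proper subshift of the irreducible shift of finite type $X$, has strictly smaller entropy, and $h(Y)\le h(\bar X)<h(X)$, so $\pi$ is automatically infinite-to-one — though this observation is only motivational.) The core of the argument is then to construct a point $z\in X$ with $\pi(z)=y$ that contains an occurrence of $\bar v$ and coincides with $\bar x$ outside a finite interval. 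Granting such a $z$, one picks an interval $[a,b]$ off of which $z$ and $\bar x$ agree, with $a$ to the left of and $b$ to the right of the occurrence of $\bar v$ in $z$, and sets $u:=\bar x_{[a,b]}\in\B(\bar X)$ and $v:=z_{[a,b]}\in\B(X)$; then $\pi(u)=y_{[a,b]}=\pi(v)$, $\bar v$ is a subblock of $v$, and $u_a=z_a=v_a$, $u_b=z_b=v_b$, so $(u,v)$ is the required diamond.

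To build $z$ I would proceed as follows: using the irreducibility of $X$, extend an occurrence of $\bar v$ by $X$-paths on both sides so as to obtain arbitrarily long $X$-blocks having $\bar v$ as an interior subblock whose $\pi$-images are subblocks of the doubly transitive point $y$; since such a $Y$-block recurs in $y$ infinitely often to the left and to the right, one then splices the $\bar v$-containing $X$-block into $\bar x$ across two far-apart such recurrences, using the irreducibility of $X$ a second time to supply the short $X$-blocks bridging $\bar x$'s coordinates to the ends of the inserted block, while keeping the $\pi$-image equal to $y$ and the splices legal in the one-step shift $X$. The delicate step — and the part I expect to be the main obstacle — is precisely this splicing: one cannot prescribe which symbol the fixed preimage $\bar x$ reads at a given coordinate of a recurrence of the relevant $Y$-block, so the connecting $X$-blocks must be produced compatibly with whatever $\bar x$ happens to read there; this is where one exploits the freedom of choosing among the infinitely many recurrences (together with a pigeonhole argument on the boundary symbols) and the irreducibility of $X$ to realize the required $X$-paths over prescribed $Y$-blocks. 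Once this routing step is carried out, $z$ and hence the $(\bar X,\bar v)$-diamond are obtained.
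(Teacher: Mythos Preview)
Your framework is sound up to the splicing step, but that step---which you yourself flag as the main obstacle---is not actually carried out, and the tools you invoke are not enough to carry it. Irreducibility of $X$ gives you $X$-paths between any two prescribed symbols, but it gives you \emph{no control whatsoever over the $\pi$-image} of such a path. So ``using the irreducibility of $X$ a second time to supply the short $X$-blocks bridging $\bar x$'s coordinates to the ends of the inserted block, while keeping the $\pi$-image equal to $y$'' is precisely what irreducibility does not do. Likewise, a pigeonhole argument on the boundary symbols of $\bar x$ at recurrences of a fixed $Y$-block will only tell you that some pair $(\bar x_{i_j},\bar x_{i_j+L})$ repeats; it does not manufacture an $X$-block from that pair, projecting to the right $Y$-word, and containing $\bar v$. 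Without an additional idea there is no reason such a block exists.

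The paper's proof supplies exactly the missing idea, and it is not a splicing argument at all. One defines $n(w)$ to be the maximal number of \emph{mutually separated} $\bar X$-preimages of a $Y$-block $w$, takes $w$ minimizing $n(w)=n$, picks $u\in\pi^{-1}(w)\cap\B(\bar X)$, and forms a cycle $\alpha=u\gamma\bar v\eta$ in $X$. Over $\pi(\alpha^{n+1})$ there are $n$ mutually separated $\bar X$-preimages $\beta^{(1)},\dots,\beta^{(n)}$; at each of the $n+1$ occurrences of $w$ inside $\pi(\alpha^{n+1})$, the minimality of $n$ forces one of the $\beta^{(m)}$ to \emph{meet} $u$ (share a coordinate symbol). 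Pigeonhole then gives a single $\beta^{(m)}$ meeting $u$ at two different occurrences, and the segment of $\beta^{(m)}$ between those two meeting points, paired with the corresponding segment of $\alpha^{n+1}$, is the $(\bar X,\bar v)$-diamond. The point is that the minimality of $n(w)$ is what forces the $\bar X$-preimage to touch the $X$-block containing $\bar v$; nothing in your outline provides an analogous mechanism.
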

\begin{proof}
    For $w$ in $\mathcal{B}(Y)$ define $n(w)$ to be the maximal number of mutually separated preimages of $w$ in $\mathcal{B}(\bar X)$ and let $n$ be the infimum of $n(w)$ where $w$ runs over all the nonempty words in $\mathcal{B}(Y)$. Clearly $n$ is a positive integer. Let $w$ be a block in $\mathcal{B}(Y)$ with $n(w)=n$ and let $u$ be a preimage of $w$ in $\mathcal{B}(\bar X)$.

    Let $\bar v$ be in $\mathcal{B}(X)\setminus\mathcal{B}(\bar X)$. Since $X$ is irreducible, there is a cycle $\alpha$ in $\mathcal{B}(X)$ such that $\alpha=u\gamma\bar v\eta$, for some blocks $\gamma,\,\eta$. Denote the length of $\alpha$ by $l$, and consider $\alpha^{n+1}$. There are at least $n$ mutually separated blocks $\beta^{(1)},\cdots,\beta^{(n)}$ in $\mathcal{B}(\bar X)$ all projecting to $\pi(\alpha^{n+1})$. Note that for all $0\le j<n+1$ and $1\le m\le n$ we have $\pi(\alpha^{n+1}_{[jl,jl+|w|)})=\pi(\beta^{(m)}_{[jl,jl+|w|)})=w$. Moreover, for all $1\le m,m'\le n$ where $m\neq m'$, the blocks $\beta^{(m)}_{[jl,jl+|w|)}$ and $\beta^{(m')}_{[jl,jl+|w|)}$ are mutually separated.

    Since $n(w)=n$, for each $0\le j<n+1$ there is $1\le m_j\le n$ such that $\beta^{(m_j)}_{[jl,jl+|w|)}$ meets $u$; i.e, there is $0<i<|w|$ such that $\beta^{(m_j)}_{jl+i}=u_i$. Thus by Pigeonhole principle there is $1\le m\le n$ such that $\beta^{(m)}$ meets $u$ twice; say at positions $jl+i$ and $j'l+i'$ for some $0\le j<j'<n+1$ and $0\le i,i'<|w|$. It is clear that the blocks $\beta^{(m)}_{[jl+i,j'l+i']}$ and $\alpha^{n+1}_{[jl+i,j'l+i']}$ form an $(\bar X,\bar v)$-diamond.
\end{proof}

\begin{rem}
 Note that Lemma~\ref{lem:modified_yoo} is not necessarily true when $X$ is reducible. For example let $X$ be the orbit closure of the point $a^\infty.b^\infty$ and $Y = \{ 0^\infty \}$ and consider the trivial map $\pi : X \to Y$. Let $\bar X=\{a^\infty \}$ and $\bar v=b$.
\end{rem}

\begin{thm}\label{thm:contain_transition_pt}
    Let $(X,Y,\pi)$ be an irreducible factor triple and let $y$ in $Y$ be right transitive. Then each transition class over $y$ contains a right transitive point.
\end{thm}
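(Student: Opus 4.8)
The plan is to fix a right transitive point $y \in Y$ and a transition class $[x] \in \C(y)$, and to construct a right transitive point $x^* \in [x]$ by an iterative "diamond insertion" procedure. Recall from Theorem \ref{thm:classdegree} and Theorem \ref{thm:con} that there are exactly $c_\pi = c^*_\pi$ transition classes over $y$, and that these classes are detected by a minimal transition block: roughly, two preimages of $y$ lie in the same class iff they can be routed through the same depth-$c^*_\pi$ symbol set at each occurrence of the magic-type block. The key input is Lemma \ref{lem:modified_yoo} (and its routing-block analogue), which guarantees that whenever we have a "missing" block $\bar v$, we can find an $(\bar X, \bar v)$-diamond: a block $v$ containing $\bar v$ with the same $\pi$-image and the same endpoints as a block $u$ already present in a given proper subshift $\bar X$.

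First I would enumerate $\B(X) = \{\bar v_1, \bar v_2, \ldots\}$ and build $x^*$ as a limit of points $x = x^{(0)}, x^{(1)}, x^{(2)}, \ldots$, where $x^{(k)}$ is obtained from $x^{(k-1)}$ by splicing in an occurrence of $\bar v_k$ far out to the right (beyond the coordinates already fixed in earlier stages), while staying inside the transition class $[x]$ and keeping $\pi(x^{(k)}) = y$. To do the $k$-th splice: since $y$ is right transitive, the block $\pi(\bar v_k)$ — or rather a block of $Y$ through which we can route — occurs infinitely often in $y_{[0,\infty)}$; pick such an occurrence at some large coordinate $N_k$. Let $\bar X$ be (a subshift generated by, or an appropriate auxiliary SFT built from) the collection of preimage blocks of $y$ "reachable within class $[x]$" over the relevant window — this is the proper subshift to which Lemma \ref{lem:modified_yoo} is applied — and let $u$ be the block that $x^{(k-1)}$ currently uses over that window. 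Apply Lemma \ref{lem:modified_yoo} to get an $(\bar X, \bar v_k)$-diamond $(u, v)$: $v$ contains $\bar v_k$, has $\pi(v) = \pi(u)$, and shares initial and terminal symbols with $u$, so we may replace $u$ by $v$ inside $x^{(k-1)}$ to obtain a legitimate point of $X$ (here one-step is used) which still maps to $y$. One then checks that this replacement does not change the transition class: because $u$ and $v$ have the same endpoints and the surgery happens on a finite window, the new point agrees with $x^{(k-1)}$ on an increasing sequence of coordinates outside the window (on both sides, if needed), so by Fact (2) after Definition \ref{defn:equiv} it stays $\sim$-equivalent to $x^{(k-1)}$, hence to $x$.

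Next I would arrange the splices so that they stabilize: choose $N_{k+1}$ large enough (to the right of everything modified at stage $k$) that $x^{(k)}_{[-k,N_k]}$ is never touched again, and moreover make sure each block $\bar v_j$ that has already been inserted gets re-inserted infinitely often (a standard back-and-forth bookkeeping: at stage $k$ insert $\bar v_{j(k)}$ where $j(k)$ cycles through all indices infinitely often). Then $x^* = \lim_k x^{(k)}$ exists coordinatewise, lies in $X$ (closedness), and satisfies $\pi(x^*) = y$ since every $x^{(k)}$ did and $\pi$ is continuous. By construction every block of $X$ occurs in $x^*_{[0,\infty)}$ infinitely often, so $x^*$ is right transitive. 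Finally, $x^* \sim x$: each finite-coordinate surgery kept us in $[x]$, and $[x]$ is closed under the appropriate limits — concretely, $x^*$ agrees with infinitely many of the $x^{(k)}$ on longer and longer initial segments, and $x^{(k)} \sim x$, so by transitivity of $\sim$ together with Fact (2) (applied to $x^*$ and any fixed $x^{(k)}$, which agree on an increasing sequence of coordinates) we get $x^* \sim x^{(k)} \sim x$.

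The main obstacle I anticipate is the second paragraph: verifying that the diamond surgery genuinely keeps the modified point \emph{within the same transition class}, and setting up the auxiliary proper subshift $\bar X$ correctly so that Lemma \ref{lem:modified_yoo} applies and the resulting block $v$ can in fact be used as a preimage of the relevant segment of $y$ (as opposed to merely a preimage of the block $\pi(v)$ abstractly). This requires combining the magic-block / transition-block structure (Definitions \ref{defn:TB}, \ref{defn:cStar}, Theorem \ref{thm:con}) with the endpoint-matching of diamonds, and is where the bulk of the technical work lies; the limit and bookkeeping arguments in the third paragraph are routine once this is in place.
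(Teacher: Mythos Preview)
Your overall architecture---enumerate $\B(X)$, use Lemma~\ref{lem:modified_yoo} to produce diamonds, splice in the missing blocks along the right ray of $x$, and check the result stays in $[x]$---is the right one, and it is essentially what the paper does. But there is a genuine gap, and it is exactly the one you flag: the choice of the auxiliary subshift $\bar X$, and the fact that Lemma~\ref{lem:modified_yoo} does \emph{not} let you prescribe the $u$-side of the diamond.

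In your proposal you write ``let $u$ be the block that $x^{(k-1)}$ currently uses over that window. Apply Lemma~\ref{lem:modified_yoo} to get an $(\bar X,\bar v_k)$-diamond $(u,v)$.'' This does not work: the lemma only promises \emph{some} pair $(u,v)$ with $u\in\B(\bar X)$; you cannot force $u$ to equal the block that happens to sit in $x^{(k-1)}$ over your chosen window. Your suggested $\bar X$ (something built from ``preimage blocks of $y$ reachable within class $[x]$'') is too vague to guarantee either that it is a proper subshift with $\pi(\bar X)=Y$, or that the $u$ the lemma hands you actually occurs anywhere in $x^{(k-1)}$.

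The missing idea is to take $\bar X=\omega(x)$, the $\omega$-limit set of the specific point $x$ you started with. Then (i) $\bar X$ is a proper subshift because $x$ is not right transitive; (ii) $\pi(\bar X)=Y$ because $y=\pi(x)$ is right transitive; and, crucially, (iii) \emph{every} block of $\bar X$ occurs in $x_{[0,\infty)}$ infinitely often, so whatever $u_i\in\B(\bar X)$ the lemma produces, you can locate it at some position $[n_i,n_i+|u_i|)$ in $x$ with the $n_i$'s increasing and the windows disjoint. With this choice there is no need for an iterative limit: you define $z$ in one shot by replacing each such occurrence of $u_i$ by $v_i$ and leaving the rest of $x$ alone. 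Since each diamond shares initial and terminal symbols, $z\in X$ and $\pi(z)=y$; since $z$ agrees with $x$ on all coordinates outside the (disjoint, right-going) windows, $z$ and $x$ agree on an increasing sequence of positive coordinates, so $z\sim x$ directly by Fact~2.4(2). No minimal transition blocks, no routing arguments, and no closure-of-$[x]$-under-limits issues are needed; your anticipated use of Definitions~\ref{defn:TB}--\ref{defn:cStar} and Theorem~\ref{thm:con} is unnecessary here.
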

\begin{proof}
    We may assume that $X$ is one-step and $\pi$ is one-block. Let $C$ be a transition class over $y$ and let $x$ be in $C$. If $x$ is right transitive, we are done. So suppose that $x$ is not right transitive. Let $\bar X$ be the $\omega$-limit set of $x$; i.e.,
    \[ \bar X = \omega(x) = \{ z \in X : \exists n_i \nearrow \infty \text{ with } \sigma^{n_i}(x) \to z \}. \]
    Then we have $\bar X \subsetneq X$. Since $\pi(x) = y$ and $y$ is right transitive, it follows that $\pi(\bar X) = Y$. Now consider an enumeration $\bar v_1, \bar v_2, \cdots$ of $\B(X)$. For each $i \in \setN$ with $\bar v_i \in \B(X) \setminus \B(\bar X)$, by Lemma \ref{lem:modified_yoo} there is an $(\bar X, \bar v_i)$-diamond $(u_i,v_i)$. Note that for this $i$, $\bar v_i$ is a subblock of $v_i$, $v_i \in \B(X) \setminus \B(\bar X)$ and $u_i \in \B(\bar X)$.

    For each $i \in \setN$, define a block $w_i \in \B(\bar X)$ by
    \[
        {w_i} =
            \begin{cases}
                \bar v_i   & \text{if $\bar v_i \in \B(\bar X)$ }   \\
                u_i         & \text{otherwise.}   \\
            \end{cases}
    \]
    Then since each $w_i$ is in $\B(\bar X)$, we can find an increasing sequence $\{n_i\}_{i=1}^\infty$ such that $x_{[n_i, n_i + |w_i|)} = w_i$ and $n_{i+1} > n_i + |w_i|$ for all $i \in \setN$. Finally, define a new point $z \in X$ obtained from $x$ by substituting each occurrence of $u_i$ at the coordinates $x_{[n_i, n_i + |w_i|)}$ for all $i \in \setN$ with $v_i \in \B(X) \setminus \B(\bar X)$. Since each $(u_i,v_i)$ forms a diamond, $z$ is indeed a point in $X$ and we have $\pi(z) = y$.

    Since there are infinitely many positive coordinates $j$ for which $x_j = z_j$, we have $z \sim x$ and therefore $z \in C$. Also, since each block in $X$ occurs infinitely many times as a subblock in the enumeration $\bar v_1, \bar v_2, \cdots$, it follows that $z$ contains all the $\bar v_i$'s and therefore is a right transitive point, as desired.
\end{proof}

\begin{rem}\label{rem:contain_lefttransition_pt}
    Let $(X,Y,\pi)$ be an irreducible factor triple and let $y$ in $Y$ be left transitive. Then each transition class over $y$ contains a left transitive point.
\end{rem}

The proof of this remark is similar to Theorem~\ref{thm:contain_transition_pt}, but much simpler. Let $\bar X$ be the $\alpha$-limit set of $x$; i.e.,
    $\bar X = \{ z \in X : \exists n_i \searrow-\infty \text{ with } \sigma^{n_i}(x) \to z \}.$ If $x$ is not left transitive, then we may construct a point $z \in X$ similarly as in the proof of Theorem~\ref{thm:contain_transition_pt}: First find a decreasing subsequence $\{n_i\}$ such that $x_{[n_i, n_i + |w_i|)} = w_i$ (same $w_i$ defined in Theorem~\ref{thm:contain_transition_pt})  and $n_{i+1} < n_i - |w_{i+1}|$ for all $i \in \setN$, and then define $z$ by substituting each occurrence of $u_i$ at the coordinates $x_{[n_i, n_i + |w_i|)}$ with $v_i$ for all $i \in \setZ$. Then $z$ is left transitive. Since $z$ and $x$ are right asymptotic, we have $z \sim x$.


The following corollary is an immediate result of Theorem~\ref{thm:contain_transition_pt} and Remark~\ref{rem:contain_lefttransition_pt}.
\begin{cor}\label{cor:contain_dbly_transition_pt}
    Let $(X,Y,\pi)$ be an irreducible factor triple and let $y$ in $Y$ be doubly transitive. Then each transition class over $y$ contains a doubly transitive point.
\end{cor}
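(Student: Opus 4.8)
The plan is to chain Theorem~\ref{thm:contain_transition_pt} with the construction underlying Remark~\ref{rem:contain_lefttransition_pt}. One cannot merely invoke the two \emph{statements} in succession: feeding a right transitive point into Remark~\ref{rem:contain_lefttransition_pt} would return a left transitive point, but that point might no longer be right transitive. The way around this is to observe that the surgery in the proof of Remark~\ref{rem:contain_lefttransition_pt} alters only coordinates tending to $-\infty$, so it leaves a right-infinite tail of the point untouched; and right transitivity depends only on that tail, hence is preserved.

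Concretely, I would argue as follows. Fix a transition class $C$ over $y$. As $y$ is doubly transitive it is in particular right transitive, so Theorem~\ref{thm:contain_transition_pt} supplies a right transitive point $z \in C$. If $z$ is also left transitive we are done; otherwise, apply the recipe in the proof of Remark~\ref{rem:contain_lefttransition_pt} with $z$ in the role of $x$: let $\bar X$ be the $\alpha$-limit set of $z$, which is a proper subshift of $X$ (because $z$ is not left transitive) with $\pi(\bar X)=Y$ (because $y$ is left transitive), and build a point $z'$ from $z$ by substituting $(\bar X,\bar v_i)$-diamond blocks at a sequence of coordinates decreasing to $-\infty$. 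Since each substitution is across a diamond, $z'\in X$ and $\pi(z')=\pi(z)=y$, and by construction $z'$ is left transitive.

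Two verifications then remain. First, $z'\in C$: the substituted coordinates tend to $-\infty$, so $z'$ and $z$ agree on a right-infinite tail $[N,\infty)$ and are hence right asymptotic; since $\pi(z)=\pi(z')=y$, the requirement of Definition~\ref{defn:equiv} is met by taking $z$ itself as the witness point for the transition $z\to z'$ and $z'$ itself for $z'\to z$, so $z\sim z'$ and $z'\in C$. Second, $z'$ is right transitive: every block of $X$ occurs infinitely often in $z_{[0,\infty)}$, and all but finitely many of those occurrences lie within $z_{[N,\infty)}=z'_{[N,\infty)}$, so every block still occurs infinitely often in $z'_{[0,\infty)}$. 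Therefore $z'$ is a doubly transitive point of $X$ lying in $C$.

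I do not expect a genuine obstacle here. The single point requiring care is the one highlighted above: one must appeal to the construction behind Remark~\ref{rem:contain_lefttransition_pt}, not just its statement, in order to be certain that the passage to a left transitive point does not destroy the right transitivity obtained from Theorem~\ref{thm:contain_transition_pt}.
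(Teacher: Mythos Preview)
Your proposal is correct and is exactly the argument the paper has in mind. The paper states the corollary as ``an immediate result'' of Theorem~\ref{thm:contain_transition_pt} and Remark~\ref{rem:contain_lefttransition_pt} without further detail; you have correctly supplied that detail, including the key observation that the construction in Remark~\ref{rem:contain_lefttransition_pt} alters only coordinates to the left of some fixed index and hence preserves the right transitivity obtained from Theorem~\ref{thm:contain_transition_pt}.
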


With the following corollary, we see that for an irreducible factor triple, the cardinalities of the transition classes over right transitive points fall into two categories: They are all finite (if a factor code is finite-to-one) or all uncountable (if it is infinite-to-one).
\begin{cor}
    Let $(X,Y,\pi)$ be an irreducible factor triple and let $y$ in $Y$ be right transitive. If $\pi$ is infinite-to-one, then the cardinality of each transition class over $y$ is uncountable.
\end{cor}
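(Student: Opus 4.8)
The plan is to realize, inside an arbitrary transition class $C$ over $y$, a copy of $2^{\setN}$ by performing countably many mutually independent ``diamond switches'' on a single point of $C$. As usual, I may assume that $X$ is one-step and $\pi$ is one-block: recoding is a conjugacy, it preserves irreducibility of $X$ and right transitivity of $y$, and it induces a bijection between the transition classes of the two triples, so it does not affect the cardinalities in question.

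The first step is to extract an honest diamond. Since $\pi$ is infinite-to-one, the classical finite-to-one theory (see \cite[\S 8.1]{LM}) provides distinct blocks $u\neq v$ in $\B(X)$ of the same length with $\pi(u)=\pi(v)$, $u_0=v_0$, and $u_{|u|-1}=v_{|v|-1}$; write $w=\pi(u)=\pi(v)$. Next, by Theorem~\ref{thm:contain_transition_pt} the class $C$ contains a right transitive point $x$. Because $x$ is right transitive, the block $u$ occurs at infinitely many coordinates of $x_{[0,\infty)}$, so I can choose pairwise disjoint intervals $[p_1,q_1],[p_2,q_2],\dots\subseteq(0,\infty)$ with $x_{[p_i,q_i]}=u$ for every $i$.

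Now, for each $S\subseteq\setN$ define $z^{S}\in X$ by letting $z^{S}$ agree with $x$ off $\bigcup_i[p_i,q_i]$, setting $z^{S}_{[p_i,q_i]}=v$ when $i\in S$, and $z^{S}_{[p_i,q_i]}=u$ when $i\notin S$. Since $X$ is one-step, $u,v\in\B(X)$, and $u,v$ share their initial and terminal symbols, $z^{S}$ is a genuine point of $X$; and since $\pi(v)=w=\pi(u)=\pi(x_{[p_i,q_i]})$ while $z^{S}$ agrees with $x$ elsewhere, $\pi(z^{S})=\pi(x)=y$. Moreover $z^{S}_{p_i}=u_0=x_{p_i}$ for every $i$, so $z^{S}$ and $x$ agree at the infinitely many positive coordinates $p_1<p_2<\cdots$; by part (2) of the Fact of \cite{AllQ13} recalled in Section~2, this forces $z^{S}\sim x$, hence $z^{S}\in C$. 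Finally, $S\mapsto z^{S}$ is injective: if $i$ lies in the symmetric difference of $S$ and $S'$, then $z^{S}$ and $z^{S'}$ carry $u$ and $v$ (in some order) on $[p_i,q_i]$ and $u\neq v$. Thus $|C|\geq 2^{\aleph_0}$; since $X$ sits inside a space of cardinality continuum we also have $|C|\leq 2^{\aleph_0}$, so $|C|=2^{\aleph_0}$, in particular uncountable.

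I do not expect a serious obstacle. The only point that needs genuine input from the structure theory is the very first step, namely deducing the existence of a diamond $(u,v)$ from the bare hypothesis that $\pi$ is infinite-to-one; the remainder is bookkeeping to check that each substitution $z^{S}$ stays in $X$, stays in $\pi^{-1}(y)$, and — via the quoted Fact — stays in the same transition class $C$.
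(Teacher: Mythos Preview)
Your argument is correct and follows essentially the same route as the paper: reduce to a one-step, one-block triple, use the existence of a diamond $(u,v)$ for an infinite-to-one code, pick a right transitive point $x\in C$ via Theorem~\ref{thm:contain_transition_pt}, and then observe that arbitrary substitutions of $u$ by $v$ along disjoint right-side occurrences yield uncountably many points equivalent to $x$. Your write-up simply makes explicit the details the paper leaves implicit (why $z^{S}\in X$, why $z^{S}\sim x$ via the cited Fact, and injectivity of $S\mapsto z^{S}$).
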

\begin{proof}
    We may assume that $X$ is one-step and $\pi$ is one-block. Recall that $\pi$ is infinite-to-one if and only if it has a diamond, say, $(u,v)$ \cite[Theorem 8.1.16]{LM}. If $C$ is a transition class over $y$, there is a right transitive point $x$ in $C$ by Theorem \ref{thm:contain_transition_pt}. Then $u$ occurs infinitely many times to the right in $x$. Any  point made by replacing some occurrences of $u$ with $v$ is equivalent to $x$, which implies that $C$ is uncountable.
\end{proof}

\vspace{0.3cm}
\section{Mutual separatedness for transition classes}
If $(X,Y,\pi)$ is an irreducible finite-to-one factor triple of degree $d$, $X$ is one-step and $\pi$ is one-block, then for each doubly transitive point $y \in Y$ the set of the preimages of $y$ consists of $d$ mutually separated points in $X$. This result is one of the important properties of fibers of finite-to-one factor codes, since it is used to prove that the degree indeed equals the number combinatorially defined using a magic block~\cite[\S 9]{CovP74, Hed69, LM}.

In this section, we present a similar mutual separatedness property for transition classes: Given two points from two distinct transition classes over a right transitive point, they are mutually separated (Theorem \ref{thm:mutually_separated}). As an application, we show that there is no transition between distinct transition classes over a right transitive point, answering a question raised by Quas.

\begin{lem}\label{lem:unique_routability}
  Let $(X,Y,\pi)$ be an irreducible factor triple with $X$ one-step and $\pi$ one-block. Given a minimal transition block $(w,n,M)$, any preimage of $w$ is routable through a unique symbol of $M$.
\end{lem}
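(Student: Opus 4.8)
The plan is to argue by contradiction: if some preimage $u$ of $w$ were routable through two distinct symbols $a, b \in M$ at time $n$, then I would use $a$ and $b$ to build a strictly smaller transition block, contradicting the minimality of $(w,n,M)$. The key observation is that routability is about \emph{joining} the fixed endpoints $u_0$ and $u_{|w|}$ through a prescribed middle symbol, so having two middle symbols available for $u$ gives some redundancy in $M$ that we can excise.

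More concretely, suppose $u \in \piinv(w)$ is routable through both $a$ and $b$ in $M$ at time $n$, with $a \neq b$; fix witnessing preimages $\bar u$ (with $\bar u_0 = u_0$, $\bar u_{|w|} = u_{|w|}$, $\bar u_n = a$) and $\hat u$ (with $\hat u_0 = u_0$, $\hat u_{|w|} = u_{|w|}$, $\hat u_n = b$). I claim $(w, n, M \setminus \{b\})$ is still a transition block, which is absurd since $|M \setminus \{b\}| < |M| = c^*_\pi$. To see the claim, take any $v \in \piinv(w)$. By hypothesis $v$ is routable through some symbol $c \in M$ at time $n$ via a preimage $v'$ with $v'_0 = v_0$, $v'_{|w|} = v_{|w|}$, $v'_n = c$. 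If $c \neq b$ we are done. If $c = b$, then since $v'_0 = v_0$ and $v'_n = b = \hat u_n$, the prefix $v'_{[0,n]}$ glues to $\hat u_{[n,|w|]}$ (both sit over $w$, both pass through $b$ at coordinate $n$, and $X$ is one-step so we only need the symbol at the overlap to match); since $\hat u_0 = u_0$ and $\bar u_0 = u_0$ while $\hat u_n = b \neq a = \bar u_n$, I instead glue $v'_{[0,n]}$ to $\bar u_{[n,|w|]}$ — wait, that fails at coordinate $n$. The correct move: glue $v'_{[0,n]}$ with $\hat u_{[n, |w|]}$ to get a preimage of $w$ with first symbol $v_0$ and last symbol $\hat u_{|w|} = u_{|w|}$; this does not yet end at $v_{|w|}$. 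So instead I reroute within $u$'s witnesses: since $\bar u$ and $\hat u$ share endpoints $u_0, u_{|w|}$ and $\bar u_n = a$, the block formed by $v'_{[0,n]} \hat u_{[n, |w|]}$ has endpoints $v_0, u_{|w|}$, and then $u_{[0,n']}$-type surgery is not available. Let me restate cleanly: glue $v'_{[0,n]}$ to $\hat u_{[n,|w|]}$, then note $\hat u$ and $\bar u$ agree at coordinates $0$ and $|w|$; replacing the $[n,|w|]$-tail requires matching at coordinate $n$, which $\bar u$ does not. The clean argument is instead: form $w' := v'_{[0,n]}\,\bar u_{[n,|w|]}$? — that needs $v'_n = \bar u_n$, i.e. $b = a$, false. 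The genuinely correct construction: since $v$ routes through $b$ and $u$ routes through both $a$ and $b$, consider $v'_{[0,n]}$ followed by $\hat u_{[n,|w|]}$, a valid preimage of $w$ from $v_0$ to $u_{|w|}$; then follow by nothing — its right endpoint is $u_{|w|}$, not $v_{|w|}$, so this shows $(v_0, w, u_{|w|})$-connectivity, not what we want. Hence the real content is that we must also route $v$'s \emph{suffix}: but the suffix of $v'$ past $n$ is already fixed ending at $v_{|w|}$, so glue $\hat u_{[0,n]}$ to $v'_{[n,|w|]}$ (matches at $n$ since both are $b$), giving a preimage of $w$ from $u_0$ to $v_{|w|}$ through $b$; now \emph{this} block's prefix $\hat u_{[0,n]}$ can be swapped for $\bar u_{[0,n]}$ only if endpoints at $0$ and symbol at $n$ match — again blocked.

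I will therefore take the cleaner route used for degree arguments: the hard part — and the crux of the proof — is showing that the ``routing symbol'' of each preimage is canonical. I would prove directly that if $u$ routes through $a$ via $\bar u$ and through $b$ via $\hat u$ with $a \neq b$, then $M \setminus \{a\}$ (or $M\setminus\{b\}$, by symmetry) still services every preimage: for $v$ routing through $c$, if $c \neq a$ keep it; if $c = a$, splice $v$'s witness prefix $v'_{[0,n]}$ onto $\bar u$'s suffix $\bar u_{[n,|w|]}$ (legal since $X$ one-step and $v'_n = a = \bar u_n$) to land at $u_{|w|}$, then splice that onto a witness showing $u_{|w|}$ reconnects to $v_{|w|}$ through some symbol of $M \setminus \{a\}$ — this last step needs that $u$ itself, and hence the pair of endpoints $(u_{|w|}, \cdot)$, reroutes, which is exactly where $\hat u$ (routing through $b \neq a$) is used. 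So the main obstacle is bookkeeping the splices so that the final block has the correct \emph{both} endpoints $v_0$ and $v_{|w|}$ while passing through $M \setminus \{a\}$ at coordinate $n$; once that surgery is arranged, the contradiction with $|M| = c^*_\pi = c_\pi$ (via Theorem \ref{thm:con}) is immediate, and uniqueness follows.
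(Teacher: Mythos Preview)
Your approach has a genuine gap that is not just a matter of bookkeeping. The block-splicing strategy you outline---reroute every $v$ that uses $a$ through some symbol of $M\setminus\{a\}$ by borrowing pieces of the witnesses $\bar u,\hat u$ for $u$---cannot succeed in general, and your own sequence of ``wait, that fails'' observations is detecting a real obstruction rather than a cosmetic one. The witnesses $\bar u$ and $\hat u$ only give you paths between the specific endpoints $u_0$ and $u_{|w|}$; they carry no information about how to connect the unrelated endpoints $v_0$ and $v_{|w|}$ through a different symbol at coordinate $n$. Every splice you attempt either (i) lands at the wrong terminal symbol, or (ii) still passes through the symbol you were trying to avoid, because gluing at coordinate $n$ forces the symbol there to stay fixed.

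There is a structural reason this cannot be patched: your argument never uses the irreducibility of $X$, yet the lemma is false without it. The paper notes this explicitly in the remark following the lemma: with $X$ the orbit closure of $a^\infty.b^\infty$, $Y=\{0^\infty\}$, and $\pi$ trivial, the triple $(000,1,\{a,b\})$ is a minimal transition block and $abb$ is routable through both $a$ and $b$. In that example $aaa$ routes only through $a$ and $bbb$ routes only through $b$, so neither symbol can be dropped from $M$---precisely the configuration your argument would need to rule out but cannot.

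The paper's proof works at the level of points and transition classes rather than blocks. One takes a point $x\in X$ in which $u$ occurs at infinitely many disjoint positions to the right, and chooses representatives $x^{(1)},\dots,x^{(d-1)}$ of the other $d-1$ transition classes over $\pi(x)$ (here $d=c_\pi=|M|$). A pigeonhole argument then forces a contradiction: either some $x^{(k)}$ is routable through $a^{(1)}$ or $a^{(2)}$ at infinitely many of these positions (making it equivalent to $x$), or all $d-1$ of them are eventually confined to the remaining $d-2$ symbols of $M$, forcing two of them to share a routing symbol infinitely often and hence to be equivalent to each other. Irreducibility enters through the existence of such a point $x$ and through Theorem~\ref{thm:classdegree}, which guarantees exactly $d$ classes over $\pi(x)$.
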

\begin{proof}
    Let $u$ be in $\pi^{-1}(w)$ and let $d=c_\pi$ be the class degree of $\pi$. If $d=1$ then the result is trivial, so suppose $d\geq 2$. Assume that $u$ is routable through two different members $a^{(1)}$ and $a^{(2)}$ of $M=\{a^{(1)},a^{(2)},\cdots,a^{(d)}\}$. Let $x$ be a point of $X$ such that $u$ occurs infinitely many times to the right, say at positions $\{[i_j,i_j+|w|)\}_{j \in \mathbb N}$ where $i_{j+1}>i_j+|w|$.

    From each transition class in $\C(\pi(x))\setminus\{[x]\}$ choose one point and denote them by $x^{(1)}, \cdots, x^{(d-1)}$. Each of these points is routable through at least one member of $M$ at time $i_j+n$ for each $j\in\mathbb N$. If there is a point in $\{x^{(1)},\cdots,x^{(d-1)}\}$ which is routable through $a^{(1)}$ or through $a^{(2)}$ at $i_j+n$ for infinitely many $j$'s, then such a point is equivalent to $x$ which gives a contradiction and we are done.

    Suppose there is no such points; i.e., each of the points $x^{(1)}, \cdots, x^{(d-1)}$ is routable through a symbol in $\{a^{(3)},\cdots,a^{(d)}\}$ at $i_j+n$ for all but finitely many $j$'s. It follows, by Pigeonhole principle, that there are at least two points in $\{x^{(1)}, \cdots, x^{(d-1)}\}$ which are routable through the same symbol in $\{a^{(3)},\cdots,a^{(d)}\}$ at $i_j+n$ for infinitely many $j$. This forces such two points to be equivalent, which is again a contradiction.
\end{proof}
\begin{rem}
 Note that Lemma~\ref{lem:unique_routability} is not necessarily true when $X$ is reducible. For example let $X$ be the orbit closure of the point $a^\infty.b^\infty$ and $Y = \{ 0^\infty \}$ and consider the trivial map $\pi : X \to Y$. The triple $(000,1,\{a,b\})$ is a minimal transition block with a preimage $abb$ which is routable through both $a$ and $b$.
\end{rem}

\begin{lem}\label{lem:not_mutsep_implies_transition}
    Let $(X,Y,\pi)$ be an irreducible factor triple with $X$ one-step and $\pi$ one-block. Suppose there are two points $x$ and $\bar x$ such that
    \begin{enumerate}
        \item $x$ and $\bar x$ are in two distinct transition classes over a right transitive point $y \in Y$, and
        \item $x$ and $\bar x$ are not mutually separated.
    \end{enumerate}
    Then given a right transitive point $z$ in $X$, there is a transition from $[z]$ to a transition class over $\pi(z)$ other than $[z]$.
\end{lem}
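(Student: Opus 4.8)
The plan is to use hypothesis (2) to pass to a situation in which the truncations of $x$ and $\bar x$ can be glued together (``crossed over''), and then to transplant such a crossover into $z$ by exploiting that $z$ is right transitive in $X$ and $\pi(z)$ right transitive in $Y$. We may assume $X$ is one-step and $\pi$ one-block. First note that (1) forces $c_\pi\ge 2$, since a right transitive point carries exactly $c_\pi$ transition classes (Theorem~\ref{thm:classdegree}). Using (2) and a shift, assume $x_0=\bar x_0=:a$. As $X$ is one-step, the point $\hat x:=x_{(-\infty,0]}\bar x_{[0,\infty)}$ lies in $X$, projects to $y$, is right asymptotic to $\bar x$ (so $[\hat x]=[\bar x]\ne[x]$), and agrees with $x$ on $(-\infty,0]$. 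Replacing $\bar x$ by $\hat x$, I may therefore assume that $x$ and $\bar x$ agree on $(-\infty,0]$, $\pi(x)=\pi(\bar x)=y$, and $[x]\ne[\bar x]$. The point of this is that for each $N$ the word $x_{[-N,0]}\bar x_{[1,N]}$ is then a legal block of $X$ projecting to $y_{[-N,N]}$, assembled from the left half of $x$ and the right half of $\bar x$; this is the crossover block.

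Next, the transfer. Since $z$ is right transitive in $X$, the image $\pi(z)$ is right transitive in $Y$ and $z$ contains every block of $\B(X)$; by Theorem~\ref{thm:classdegree} there are exactly $c_\pi\ge2$ classes over $\pi(z)$, so I fix a class $C\ne[z]$ over $\pi(z)$ and, by Theorem~\ref{thm:contain_transition_pt}, a right transitive point $w\in C$, which likewise contains every block of $\B(X)$. It suffices to prove $z\to w$, for then $[z]\to C$ with $C\ne[z]$. So fix $n\in\setZ$; I seek $s\in X$ with $\pi(s)=\pi(z)$, $s_{(-\infty,n]}=z_{(-\infty,n]}$, and $s_{[i,\infty)}=w_{[i,\infty)}$ for some $i\ge n$. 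For a large $N$, use right transitivity of $z$ and $w$ in $X$ to choose positions $Q_1>n+N$ with $z_{[Q_1-N,\,Q_1+N]}=x_{[-N,N]}$ and, sufficiently far to the right, $Q_2$ with $w_{[Q_2-N,\,Q_2+N]}=\bar x_{[-N,N]}$; both blocks read $a$ at their centres and project to $y_{[-N,N]}$. Let $s$ agree with $z$ on $(-\infty,Q_1]$; equal the crossover block $x_{[-N,0]}\bar x_{[1,N]}$ on $[Q_1-N,\,Q_1+N]$ (consistent, since $z$ already reads $x_{[-N,0]}$ on $[Q_1-N,Q_1]$); agree with $w$ on $[Q_2-N,\infty)$ (note $Q_2-N\ge n$); and on the intervening stretch $[Q_1+N,\,Q_2-N]$ be a preimage of $\pi(z)_{[Q_1+N,\,Q_2-N]}$ whose first symbol is $\bar x_N$ and whose last symbol is $\bar x_{-N}=w_{Q_2-N}$. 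Granting such a connecting preimage, one checks routinely that $s$ is a legal point of the one-step shift $X$ and that $\pi(s)=\pi(z)$, whence $s$ witnesses $z\to w$; since $n$ was arbitrary, $z\to w$.

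The crux is producing that connecting preimage: over $\pi(z)$, one must be able to join $\bar x_N$ — the right endpoint of a central block of $\bar x$ — to $\bar x_{-N}$, its left endpoint, across the block $\pi(z)_{(Q_1+N,\,Q_2-N)}$. This is where the structural machinery of the paper is needed: route through a minimal transition block $(W,n_0,M)$ occurring in that stretch (it exists since $\pi(z)$ is right transitive), and reroute using the construction in the proof of Lemma~\ref{lem:unique_routability}; the crossover feature coming from $x_0=\bar x_0$ is exactly what guarantees that copies of $\bar x$'s tail lie inside $z$ and $w$ compatibly with the block structure of $y$, so that the reroute is possible. One should also pick $Q_1,Q_2$ so that this join goes through: since every block of $\B(X)$ of the form $\bar x_{[N,M]}\,\delta\,\bar x_{[-M,-N]}$, with $\delta$ an $X$-path from $\bar x_M$ to $\bar x_{-M}$ (which exists by irreducibility of $X$), occurs in $z$ and in $w$, one may select the occurrences along one long block of this shape that $\pi(z)$ is forced to contain. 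The remaining checks — legality of the spliced point, preservation of the image, and existence of a class $C\ne[z]$ over $\pi(z)$ (which rests on $c_\pi\ge 2$) — are routine. I expect the technical heart to be exactly this reroute across a minimal transition block, i.e., making precise the sense in which hypothesis (2) permits passage between the ``$x$-side'' and the ``$\bar x$-side'' inside an arbitrary right transitive point.
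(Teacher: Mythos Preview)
Your reduction to $x_{(-\infty,0]}=\bar x_{(-\infty,0]}$ is sound and parallels the paper's use of a shared initial symbol. The real gap is the ``connecting preimage''. You need a $\pi$-preimage of $\pi(z)_{[Q_1+N,\,Q_2-N]}$ starting at $\bar x_N$ and ending at $\bar x_{-N}$, but nothing you have written shows this exists: you only control $\pi(z)$ on $[Q_1-N,Q_1+N]$ (where it equals $y_{[-N,N]}$), and beyond position $Q_1+N$ the words $\pi(z)$ and $y$ part ways, so $\bar x_N$ need not even have a follower in $X$ projecting to $\pi(z)_{Q_1+N+1}$. Placing a minimal transition block inside the gap does not obviously help either: you would need the left fragment (starting at $\bar x_N$, if it extends at all) and the right fragment $w_{[\cdot,\,Q_2-N]}$ to be routable through a \emph{common} element of $M$, and nothing in your argument arranges this. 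The appeal to blocks of the form $\bar x_{[N,M]}\delta\bar x_{[-M,-N]}$ is also inert, because an occurrence of such a block in $z$ or in $w$ constrains those points, not the new point $s$ you are trying to assemble over the already-fixed word $\pi(z)_{[Q_1+N,\,Q_2-N]}$. A related and more structural problem is that you fix the target class $C\ne[z]$ and the point $w\in C$ in advance; the lemma only promises a transition to \emph{some} other class, and in general one cannot transition from $[z]$ to a prescribed class.

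The paper's argument avoids both problems simultaneously. It first extends the minimal transition block to a block $\bar w$ whose $x$- and $\bar x$-preimages share an initial symbol but pass through distinct $a,b\in M$ at the coordinate $\bar n$. Then it finds in the right transitive point $z$ occurrences of the $x$-preimage of $\bar w$; at each such occurrence $z$ sits at $a\in M$. Representatives $z^{(2)},\dots,z^{(d)}$ of the other classes over $\pi(z)$, rerouted via Lemma~\ref{lem:unique_routability} to lie in $M$ at those coordinates, must eventually occupy $d$ distinct elements of $M$ (else two of them would share a symbol infinitely often and hence be equivalent); by pigeonhole one of them, say $\bar z$, reads $b$ at infinitely many of these positions. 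Now the splice is immediate: keep $z$ on the left, replace the $x$-block by the $\bar x$-block (same initial symbol, so the left join is legal), and continue with $\bar z$ on the right (both read $b$ at coordinate $\bar n$, so the right join is legal). The target class is \emph{discovered} by the pigeonhole rather than chosen in advance, and the splice never leaves the word $\pi(z)$; this is precisely the mechanism your scheme lacks.
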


\begin{proof}
Let $(w,n,M)$ be a minimal transition block. Since $y$ is right transitive, there is a sequence $\{i_j\}_{j \in \setN}$ such that $y_{[i_j,i_j+|w|)}=w$ and $i_{j+1} > i_j + |w|$. By assumption (2) for some integer $i$ we have $x_i=\bar x_i$. Since $x$ and $\bar x$ are in distinct transition classes, there is $i_j>i$ and two distinct symbols $a,b\in M$ such that $x$ and $\bar x$ are routable through $a$ and $b$, respectively, at time $i_j+n$. By taking equivalent points in the transition classes of $x$ and $\bar x$, we may assume $x_{i_j+n}=a$ and $\bar x_{i_j+n}=b$.

Consider the block $y_{[i,i_j+|w|)}$ which is an extension of $w$. Denote this extension by $\bar w$, and the coordinate $i_j-i+n$ by $\bar n$. Then $(\bar w,\bar n,M)$ is also a minimal transition block. Note that by above, block $\bar w$ has two preimages $x_{[i,i+|\bar w|)}$ and $\bar x_{[i,i+|\bar w|)}$ which share the same initial symbol; moreover, $x_{i+\bar n}=a$ and $\bar x_{i+\bar n}=b$.

Now let $z$ be a right transitive point in $X$. Note that for infinitely many $k\in\Z$ we have $z_{[k,k+|\bar w|)}=x_{[i,i+|\bar w|)}$. Denote the class degree by $d$ and let $z^{(1)} = z$. From each transition class of $\C(\pi(z))\setminus\{[z]\}$ choose a point and denote them by $z^{(2)},z^{(3)},\cdots, z^{(d)}$. Since $(\bar w,\bar n,M)$ is a minimal transition block, by Lemma \ref{lem:unique_routability} we may assume that $z^{(j)}_{k+\bar n} \in M$ for each $1 < j \leq d$. Since $z^{(j)}$'s are from distinct transition classes, the set $\{ z^{(j)}_{k+\bar n} : 1 \leq j \leq d \}$ consists of $d$ symbols for all large such $k$. Thus there is a point $\bar z$ among $z^{(2)},z^{(3)},\cdots,z^{(d)}$ such that among those $k$ at infinitely many $l$ we have $\bar z_{l+\bar n}=b$. Then for any such $l$ the points $u^{(l)}$ defined by
     $$u^{(l)}_{(-\infty,l)}=z_{[-\infty,l)}, u^{(l)}_{[l,l+\bar n)}=\bar x_{[i,i+\bar n)}\textnormal{ and }u^{(l)}_{[l+\bar n,\infty)}=\bar z_{[l+\bar n,\infty)}$$ give a transition $[z]\to [\bar z]$ over the right transitive point $\pi(z)$.
\end{proof}

\begin{thm}\label{thm:mutually_separated}
    Let $(X,Y,\pi)$ be an irreducible factor triple with $X$ one-step and $\pi$ one-block. Let $y \in Y$ be right transitive. Then any two points from two distinct transition classes over $y$ are mutually separated.
\end{thm}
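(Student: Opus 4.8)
The plan is to argue by contradiction: assuming the conclusion fails, I would feed right transitive points of $X$ into Lemma~\ref{lem:not_mutsep_implies_transition} and then use finiteness of the set of transition classes to force a cycle, which the transitivity of the transition relation collapses. So suppose there are points $x,\bar x$ lying in two distinct transition classes over the right transitive point $y$ that are \emph{not} mutually separated. Since $Y$ is an irreducible sofic shift, $\mathscr{C}(y)$ is finite; write $\mathscr{C}(y)=\{C_1,\dots,C_d\}$, and note $d\ge 2$ because $x$ and $\bar x$ sit in distinct classes. The hypotheses of Lemma~\ref{lem:not_mutsep_implies_transition} are now satisfied.

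First I would invoke Theorem~\ref{thm:contain_transition_pt} to choose, for each $i$, a point $z_i\in C_i$ that is right transitive in $X$; automatically $\pi(z_i)=y$. Applying Lemma~\ref{lem:not_mutsep_implies_transition} with $z=z_i$ produces a transition from $[z_i]=C_i$ to some transition class over $\pi(z_i)=y$ other than $C_i$; denote it $C_{j(i)}$, so that $j(i)\ne i$ and $C_i\to C_{j(i)}$. Thus $j$ is a fixed-point-free self-map of the finite set $\{1,\dots,d\}$, and the functional graph of such a $j$ must contain a cycle, necessarily of length $\ell\ge 2$: $C_{a_1}\to C_{a_2}\to\cdots\to C_{a_\ell}\to C_{a_1}$ with $a_1,\dots,a_\ell$ pairwise distinct.

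To close the argument I would use that the transition relation is transitive on transition classes --- which follows from the one-step property of $X$ by concatenating witnesses (given $p\to q$ and $q\to r$, glue a point that agrees with $p$ on a left ray, then with $q$, then with $r$ further to the right, in the sense of Definition~\ref{defn:equiv}). Traversing the cycle then yields $C_{a_2}\to C_{a_1}$, while the cycle itself gives $C_{a_1}\to C_{a_2}$; hence, picking representatives $p\in C_{a_1}$ and $q\in C_{a_2}$, we get $p\to q$ and $q\to p$, so $p\sim q$ and $C_{a_1}=C_{a_2}$, contradicting $a_1\ne a_2$. This contradiction proves the theorem.

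Essentially all the real content is already contained in Lemma~\ref{lem:not_mutsep_implies_transition}, so I do not expect a serious obstacle here; the one place requiring care is the interface. One must ensure the lemma is genuinely applicable class by class, which is exactly why Theorem~\ref{thm:contain_transition_pt} is needed (so that each class over $y$ does contain a right transitive point of $X$ to plug in as $z$), and one must record the transitivity of $\to$ so that the cycle can be collapsed. Both are routine, but skipping either would leave a gap.
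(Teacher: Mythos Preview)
Your proof is correct and follows essentially the same approach as the paper's: assume failure, use Theorem~\ref{thm:contain_transition_pt} to place a right transitive point in every class, apply Lemma~\ref{lem:not_mutsep_implies_transition} class by class to get an outgoing transition from each class to a different one, and then derive a contradiction from finiteness of $\mathscr{C}(y)$. The only difference is in the packaging of the final combinatorial step: the paper phrases it as ``a finite set of classes must contain one with no transition to any other class'' (implicitly using that $\to$ is a partial order on classes, so a finite poset has a maximal element), whereas you make the cycle explicit and collapse it via transitivity of $\to$; these are equivalent arguments.
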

\begin{proof}
    Suppose not; i.e., there are two points $x$ and $\bar x$ in distinct transition classes over $y$ such that $x$ and $\bar x$ are not mutually separated. Recall that any transition class over $y$ contains a right transitive point. Then by Lemma \ref{lem:not_mutsep_implies_transition}, given any class $C\in\C(y)$ there is a transition from $C$ to some other transition class over $y$. However, since there are only finitely many classes in $\C(y)$, there must be a transition class over $y$ with no transition to any other class which is a contradiction.
\end{proof}

One can easily check the following corollary, which is a conjugacy-invariant version of Theorem \ref{thm:mutually_separated}.

\begin{cor}\label{thm:mutually_separated_conjugacy_inv}
    Let $(X,Y,\pi)$ be an irreducible factor triple. Then there is $c > 0$ such that, whenever $y \in Y$ is right transitive and $x, \bar x$ are points from two distinct transition classes over $y$, we have $d(x,\bar x) > c$.
\end{cor}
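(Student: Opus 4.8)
The plan is to reduce the conjugacy-invariant statement to the concrete version already proved in Theorem \ref{thm:mutually_separated}. The issue is that Theorem \ref{thm:mutually_separated} is stated under the normalizing hypotheses that $X$ is one-step and $\pi$ is one-block, and it asserts mutual separatedness in terms of the alphabet of that particular presentation; a conjugacy changes the alphabet, so ``mutually separated'' is not itself a conjugacy invariant. What survives conjugacy is the weaker metric statement: distinct transition classes over a right transitive point stay uniformly bounded away from each other in the standard metric $d(\cdot,\cdot)$ on $X$. So first I would fix a conjugacy $\phi$ from $X$ to a shift space $\wt X$ together with a recoding so that $\wt X$ is one-step and the induced code $\wt\pi$ is one-block; recall from \S 2 that $c_\pi$ is a conjugacy invariant and, more importantly, that the transition relation $\sim$ and hence the partition into transition classes is preserved by conjugacy (if $x \to \bar x$ in $X$ then $\phi(x) \to \phi(\bar x)$ in $\wt X$, since condition (2) of Definition \ref{defn:equiv} is about eventual agreement on a ray, which a sliding block code with memory and anticipation preserves after shifting the coordinate $i$). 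Likewise a point is right transitive in $X$ iff its image is right transitive in $\wt X$.

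Next, given a right transitive $y \in Y$ and points $x, \bar x \in \piinv(y)$ in distinct transition classes over $y$, pass to $\wt x = \phi(x)$, $\wt{\bar x} = \phi(\bar x)$, which lie in distinct transition classes over the corresponding right transitive point of $\wt Y$; by Theorem \ref{thm:mutually_separated} applied to the triple $(\wt X, \wt Y, \wt\pi)$, the points $\wt x$ and $\wt{\bar x}$ are mutually separated, i.e. $\wt x_k \neq \wt{\bar x}_k$ for every $k \in \setZ$. In particular $\wt x$ and $\wt{\bar x}$ disagree at coordinate $0$, so in the standard metric on $\wt X$ we have $d(\wt x, \wt{\bar x}) \geq 1$ (or whatever the normalization gives for disagreement at the origin). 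Now pull this back through $\phi$: since $\phi^{-1}$ is a sliding block code, it is Lipschitz, say with constant $L$ depending only on the memory and anticipation of $\phi^{-1}$ (equivalently, $\phi$ is uniformly continuous, so disagreement of $\wt x, \wt{\bar x}$ at coordinate $0$ forces $x$ and $\bar x$ to disagree somewhere in a bounded window $[-N,N]$ determined by the recoding window of $\phi$). Hence $d(x, \bar x) \geq c$ where $c = 2^{-N}$ depends only on $\phi$, and therefore only on the triple $(X,Y,\pi)$ — not on $y$, $x$, or $\bar x$. This is the claimed $c$.

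The only step requiring care — and I would expect it to be the main (minor) obstacle — is making precise that ``mutual separatedness in the recoded presentation'' really does translate into a uniform metric lower bound back in the original $X$, uniformly over all choices of $y$ and of the representative points. The point is that the constant $N$ (the window size of $\phi$ and $\phi^{-1}$) is fixed once and for all by the recoding, independent of which right transitive $y$ we look at and which preimages in which classes we pick; mutual separatedness in $\wt X$ is a statement that holds at \emph{every} coordinate simultaneously, so in particular at coordinate $0$, and that single-coordinate disagreement is all we need to propagate back. Everything else is the routine observation that conjugacies preserve right transitivity and the transition-class partition, which I would state as a one-line remark rather than prove in detail.
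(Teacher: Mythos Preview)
Your proposal is correct and is exactly the routine verification the paper has in mind: the paper gives no proof at all beyond the remark ``One can easily check the following corollary, which is a conjugacy-invariant version of Theorem~\ref{thm:mutually_separated},'' and your argument (recode to a one-step/one-block presentation, apply Theorem~\ref{thm:mutually_separated} there, then pull back the coordinate-zero disagreement through the fixed sliding-block window of the conjugacy to get a uniform metric gap) is precisely how one fills in that check.
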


\begin{cor}
    Let $(X,Y,\pi)$ be an irreducible factor triple. Then each transition class over a right transitive point is a closed set (with respect to the usual topology on $X$).
\end{cor}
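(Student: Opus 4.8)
The plan is to obtain this as an immediate consequence of the conjugacy-invariant mutual separatedness established in Corollary~\ref{thm:mutually_separated_conjugacy_inv}. Fix a right transitive point $y \in Y$ and a transition class $C$ over $y$. Since $X$ is compact and metrizable, it is enough to check that $C$ is sequentially closed: I would take a sequence $(x^{(k)})_{k \in \setN}$ in $C$ with $x^{(k)} \to x$ in $X$ and argue that $x \in C$.

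First I would note that $x \in \piinv(y)$: continuity of $\pi$ together with $\pi(x^{(k)}) = y$ for all $k$ forces $\pi(x) = \lim_k \pi(x^{(k)}) = y$. Since $\C(y)$ partitions $\piinv(y)$, the point $x$ then belongs to some transition class $C'$ over $y$, and the whole problem reduces to showing $C' = C$.

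For this I would argue by contradiction. If $C' \neq C$, then $x \in C'$ and each $x^{(k)} \in C$ are points from two distinct transition classes over the right transitive point $y$, so Corollary~\ref{thm:mutually_separated_conjugacy_inv} supplies a constant $c > 0$ (depending only on $(X,Y,\pi)$) with $d(x, x^{(k)}) > c$ for every $k$; this contradicts $x^{(k)} \to x$. Hence $C' = C$, so $x \in C$, and $C$ is closed. There is essentially no serious obstacle once Corollary~\ref{thm:mutually_separated_conjugacy_inv} is in hand; the only things to verify are the two elementary facts used above, namely that a limit of $\pi$-preimages of $y$ is again a preimage of $y$, and that every preimage of $y$ lies in a (unique) transition class over $y$.
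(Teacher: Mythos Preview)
Your proof is correct and follows essentially the same approach as the paper: the limit is a preimage of $y$, and mutual separatedness of distinct transition classes forces it to lie in $C$. The only cosmetic difference is that the paper first recodes to the one-step/one-block setting and invokes Theorem~\ref{thm:mutually_separated} directly (using $x^{(i)}_0 = x_0$ for large $i$), whereas you cite the conjugacy-invariant Corollary~\ref{thm:mutually_separated_conjugacy_inv} to avoid recoding.
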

\begin{proof}
    We may assume that $X$ is one-step and $\pi$ is one-block. Let $\{x^{(i)}\}_{i\in\mathbb N}$ be a convergent sequence in a transition class $C$ over a right transitive point $y$. Denote the limit of this sequence by $x$. Then $\pi(x)=y$. Since $x^{(i)} \to x$, for large $i$ we have $x^{(i)}_0 = x_0$. Then $x$ is not mutually separated from this $x^{(i)}$, so by Theorem \ref{thm:mutually_separated} that $x$ must belong to $C$.
\end{proof}

\begin{cor}\label{thm:no_transition}
    Let $(X,Y,\pi)$ be an irreducible factor triple and let $y$ be a right transitive point in $Y$. There is no transition between any two distinct transition classes over $y$.
\end{cor}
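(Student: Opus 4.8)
The plan is to obtain Corollary~\ref{thm:no_transition} as a short consequence of Theorem~\ref{thm:mutually_separated}, so that essentially all of the real work has already been done. First I would reduce to the case where $X$ is one-step and $\pi$ is one-block: transitions, transition classes, and right transitivity are all preserved under conjugacy, so a transition between two distinct transition classes over a right transitive point in $(X,Y,\pi)$ would produce such a transition in any recoded (one-step, one-block) triple conjugate to it.

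Next, arguing by contradiction, I would assume there exist $x,\bar x$ with $\pi(x)=\pi(\bar x)=y$, with $[x]\neq[\bar x]$ in $\C(y)$, and with $x\to\bar x$. Applying Definition~\ref{defn:equiv} with, say, $n=0$ produces a witness point $z\in\piinv(y)$ satisfying $z_{(-\infty,0]}=x_{(-\infty,0]}$ and $z_{[i,\infty)}=\bar x_{[i,\infty)}$ for some $i\geq 0$. The key observation is that $z$ then agrees with $x$ at coordinate $0$ and with $\bar x$ at coordinate $i$; in particular $z$ is not mutually separated from $x$, and not mutually separated from $\bar x$. Since $y$ is right transitive, Theorem~\ref{thm:mutually_separated} (in contrapositive form) forces $z$ to lie in the same transition class over $y$ as $x$, i.e. $z\sim x$, and likewise $z\sim\bar x$. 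By transitivity of $\sim$ we get $x\sim\bar x$, contradicting $[x]\neq[\bar x]$, and the corollary follows.

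I do not expect any genuine obstacle in this argument: the substance lies entirely in Theorem~\ref{thm:mutually_separated} (and, behind it, in Lemmas~\ref{lem:unique_routability} and~\ref{lem:not_mutsep_implies_transition}). The only mild point requiring care is the reduction step, where one must check that a transition between distinct transition classes over a right transitive point transports correctly under a conjugacy $(\phi,\psi)$ with $\wt\pi\circ\phi=\psi\circ\pi$; this is routine directly from the definitions.
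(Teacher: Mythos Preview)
Your argument is correct and follows essentially the same route as the paper's proof: reduce to the one-step, one-block case, take a witness $z$ for the transition, and derive a contradiction with Theorem~\ref{thm:mutually_separated}. The only cosmetic difference is that the paper observes directly that $z\in[\bar x]$ (since $z$ is right asymptotic to $\bar x$) and then applies Theorem~\ref{thm:mutually_separated} once to $z$ and $x$, whereas you invoke the theorem twice; both are fine.
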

\begin{proof}
    We may assume that $X$ is one-step and $\pi$ is one-block.
    Suppose, on the contrary, that there is a transition $[x] \to [\bar x]$ between two distinct transition classes $[x]$ and $[\bar x]$ over $y$. Then there is a point $z \in \pi^{-1}(y)$ such that $z_{(\infty,0]} = x_{(\infty,0]}$ and $z_{[i,\infty)} = \bar x_{[i,\infty)}$ for some $i > 0$. Since $z \in [\bar x]$ and $z$ and $x$ are not mutually separated, by Theorem \ref{thm:mutually_separated} we have a contradiction.
\end{proof}

The following examples show that there may be a transition if the domain is not irreducible, or the point in $Y$ is not right transitive.

\begin{ex}\label{ex:trans}
(1) Let $X$ be the orbit closure of the point $a^\infty .b^\infty$ and $Y = \{ 0^\infty \}$ and consider the trivial map $\pi : X \to Y$. Above the point $0^\infty$ there are two transition classes: one class, say $C_1$, consists of only one point $a^\infty$ and the other class $C_2$ consists of all the points of the form $a^\infty b^\infty$ and $b^\infty$. Even though $0^\infty$ is a (right) transitive point, there is a transition from $C_1$ to $C_2$. Note that $X$ is not irreducible.

(2) Let $X$ be an irreducible edge shift given by the diagram and $Y = \{ 0, 1 \}^\setZ$. Let $\pi : X \to Y$ be a one-block factor code given by $\pi(a) = \pi(b) = \pi(d) = 0$ and $\pi(c) = \pi(e) = 1$. Note that $c_\pi=1$.

    \begin{center}
    \begin{tikzpicture}[->,>=stealth',shorten >=1pt,auto,node distance=3cm, thick,main node/.style={circle,draw}]

        \node[main node] (I) {$I$};
        \node[main node] (J) [right of=I] {$J$};

        \path
            (I) edge [loop left, looseness=10, out=150, in=210] node {$a$} (I)
                edge [bend left] node {$b$} (J)
                edge [bend left, out=60, in=120] node {$c$} (J)
            (J) edge [loop right, looseness=10, out=30, in=330] node {$d$} (J)
                edge [bend left] node {$e$} (I);
    \end{tikzpicture}
    \end{center}

Let $y$ in $Y$ be any point with $y_{[0,\infty)} = 0^\infty$. Since each vertex has incoming edges labeled by $0$ and by $1$ respectively, there are two left infinite paths, say $\alpha$ and $\beta$, mapping to $y_{(-\infty,0]}$ and terminating at $I$ and $J$, respectively. Hence as in the previous example, $y$ has two transition classes: one containing $\alpha.a^\infty$ and the other containing $\alpha.a^n b d^\infty$ for all $n \in \setN$ and $\beta. d^\infty$. It follows that there is a transition from one transition class over $y$ to the other. Note that one can even take $y$ to be left transitive, so even the left transitivity of $y$ does not guarantee no transition between transition classes over $y$.
\end{ex}

In the definition of a transition $x \to \bar x$ between two points $x$ and $\bar x$ in $X$, we require an infinite number of points which are left asymptotic to $x$ and right asymptotic to $\bar x$, since for each $n \in \mathbb{N}$ we need a point which equals $x$ until time $n$. However, if the point $y = \pi(x) = \pi(\bar x)$ is right transitive, to satisfy this definition, a single asymptotic point to $x$ and $\bar x$ suffices, as condition (3) in the following corollary shows.

\begin{cor}\label{cor:transition_equals_equivalence}
    Let $(X,Y,\pi)$ be an irreducible factor triple and $y \in Y$ be right transitive. Then for $x, \bar x \in \pi^{-1}(y)$ the following are equivalent:
    \begin{enumerate}
        \item $x \sim \bar x$
        \item $x \to \bar x$
        \item There is a point in $\pi^{-1}(y)$ which is left asymptotic to $x$ and right asymptotic to $\bar x$.
    \end{enumerate}
\end{cor}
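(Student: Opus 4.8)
The plan is to establish the cycle of implications $(1)\Rightarrow(2)\Rightarrow(3)\Rightarrow(1)$. As a preliminary step I would recode so that $X$ is one-step and $\pi$ is one-block; this is harmless because transition classes, right transitivity, and left/right asymptoticity all transport across conjugacies, and the induced conjugacy on $Y$ carries $y$ to a right transitive point. With this normalization Theorem~\ref{thm:mutually_separated} and the basic facts of \cite{AllQ13} are available.

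The first two implications are essentially formal. For $(1)\Rightarrow(2)$, if $x\sim\bar x$ then in particular $x\to\bar x$ by the definition of $\sim$. For $(2)\Rightarrow(3)$, assume $x\to\bar x$ and apply Definition~\ref{defn:equiv} with the single value $n=0$: this yields a point $z\in X$ with $\pi(z)=\pi(x)=y$, with $z_{(-\infty,0]}=x_{(-\infty,0]}$, and with $z_{[i,\infty)}=\bar x_{[i,\infty)}$ for some $i\ge 0$. Such a $z$ lies in $\pi^{-1}(y)$, is left asymptotic to $x$, and is right asymptotic to $\bar x$, which is exactly statement~$(3)$.

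The substance is in $(3)\Rightarrow(1)$. Here I would deliberately avoid the naive attempt to prove $(3)\Rightarrow(2)$ by hand: starting from a single witness $z$ one would have to produce, for \emph{every} $n$, a point agreeing with $x$ on all of $(-\infty,n]$ and right asymptotic to $\bar x$, but $z$ controls only a bounded prefix of $x$ and there is no evident way to slide its ``transition window'' arbitrarily far to the right while still matching $x$ symbol-for-symbol. Instead I would route through the equivalence relation. Let $z\in\pi^{-1}(y)$ be left asymptotic to $x$, say $z_{(-\infty,N]}=x_{(-\infty,N]}$, and right asymptotic to $\bar x$, say $z_{[M,\infty)}=\bar x_{[M,\infty)}$. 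Since $z$ and $\bar x$ are preimages of $y$ agreeing at all sufficiently large coordinates, one gets $z\sim\bar x$: this follows straight from Definition~\ref{defn:equiv} (the constant point $z$ witnesses $z\to\bar x$ for every $n$, and the constant point $\bar x$ witnesses $\bar x\to z$ for every $n$), or alternatively from the coordinate-agreement fact of \cite{AllQ13}. Now suppose, toward a contradiction, that $x\not\sim\bar x$. Then $x\not\sim z$, so $x$ and $z$ lie in two distinct transition classes over the right transitive point $y$; by Theorem~\ref{thm:mutually_separated} they must be mutually separated, forcing $x_N\neq z_N$. This contradicts $z_{(-\infty,N]}=x_{(-\infty,N]}$. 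Hence $x\sim\bar x$, closing the cycle.

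The main obstacle is therefore conceptual rather than computational: the correct move is to prove $(3)\Rightarrow(1)$ rather than attempting $(3)\Rightarrow(2)$ by an explicit construction of witnesses, converting the single shared coordinate that left asymptoticity supplies into a violation of the mutual separatedness guaranteed by Theorem~\ref{thm:mutually_separated}. Everything else in the cycle is an unwinding of definitions.
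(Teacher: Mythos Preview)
Your proof is correct and follows essentially the same approach as the paper: normalize to one-step/one-block, handle the easy implications by unwinding Definition~\ref{defn:equiv}, and for $(3)\Rightarrow(1)$ observe that the witness $z$ lies in $[\bar x]$ (by right asymptoticity) yet shares coordinates with $x$, contradicting Theorem~\ref{thm:mutually_separated} unless $x\sim\bar x$. The only cosmetic difference is that the paper cites Corollary~\ref{thm:no_transition} to get $(2)\Rightarrow(1)$ directly, whereas you close the cycle through $(3)$; since that corollary is itself an immediate consequence of Theorem~\ref{thm:mutually_separated}, the arguments are the same in substance.
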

\begin{proof}
    Since all the conditions (1), (2) and (3) are conjugacy-invariant, we may assume that $X$ is one-step and $\pi$ is one-block. The equivalence of (1) and (2) follows from Corollary \ref{thm:no_transition}. (2) clearly implies (3). Suppose that $x$ and $\bar x$ are not equivalent. If $z$ is a point satisfying condition (3), we have $z \in [\bar x]$. Since $z$ and $x$ are left asymptotic, they are not mutually separated, which contradicts Theorem \ref{thm:mutually_separated}.
\end{proof}

The following corollary is immediate.
\begin{cor}
    Let $(X,Y,\pi)$ be an irreducible finite-to-one factor triple. Let $y$ in $Y$ be right transitive. Then for each $x$ and $\bar x$ in $\pi^{-1}(y)$, they are either mutually separated or right asymptotic.
\end{cor}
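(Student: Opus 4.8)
The plan is to reduce to the case that $X$ is one-step and $\pi$ is one-block, and then run a short counting argument at a magic coordinate. Since $\pi$ is finite-to-one we have $c_\pi = d_\pi =: d$, and by Theorem~\ref{thm:classdegree} there are exactly $d$ transition classes over the right transitive point $y$. Fix a magic block $w$ together with a magic coordinate: by the minimality defining $d(w)$, at any occurrence of $w$ in $y$ the preimages of $y$ realize exactly $d(w)=d$ symbols at that magic coordinate. Finally observe that for a pair $x,\bar x$ the asserted dichotomy says precisely that if $x$ and $\bar x$ agree in at least one coordinate then they are right asymptotic; so it is enough to derive a contradiction from the assumption that $x$ and $\bar x$ agree somewhere but are not right asymptotic.

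I would first record two structural facts. Because $\pi$ is finite-to-one it has no diamonds \cite[Theorem~8.1.16]{LM}, so whenever $x_{k_1}=\bar x_{k_1}$ and $x_{k_2}=\bar x_{k_2}$ with $k_1<k_2$, the blocks $x_{[k_1,k_2]}$ and $\bar x_{[k_1,k_2]}$ --- equal image, equal endpoints --- must coincide; hence the agreement set $A=\{k:x_k=\bar x_k\}$ is a discrete interval. It is nonempty (they agree somewhere) and contains no right ray (they are not right asymptotic), so it has a maximum $r$, and thus $x_k\neq\bar x_k$ for all $k\geq r+1$. Second, by Theorem~\ref{thm:mutually_separated} any two preimages of $y$ lying in distinct transition classes are mutually separated; since $x$ and $\bar x$ agree somewhere they lie in one common transition class $C$, and choosing one preimage of $y$ from each of the remaining $d-1$ classes yields points that are pairwise mutually separated and each mutually separated from both $x$ and $\bar x$.

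The main step is then immediate. For every $k\geq r+1$, the $d-1$ chosen points together with $x_k$ and $\bar x_k$ give $d+1$ pairwise distinct symbols occurring among $\{x'_k:x'\in\pi^{-1}(y)\}$ --- the $d-1$ points are mutually separated from each other and from $x,\bar x$, while $x_k\neq\bar x_k$ by the choice of $r$ --- so $|\{x'_k:x'\in\pi^{-1}(y)\}|\geq d+1$ there. But $y$ is right transitive, hence $w$ occurs in $y$ at some position past $r$, and at its magic coordinate (a coordinate $\geq r+1$) the preimages of $y$ realize only $d$ symbols, a contradiction. Therefore $x$ and $\bar x$ are right asymptotic, which (together with the trivial fact that a right asymptotic pair cannot be mutually separated) gives the dichotomy. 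The case $d=1$ needs no separate treatment, the contradiction there being just $1=d(w)\geq 2$. The one step I would be careful about is the reduction to the one-step, one-block setting, since mutual separatedness is not preserved verbatim under a higher-block recoding; but both ingredients used above --- diamond-freeness and the magic-coordinate count --- are already available in the given coordinates, so one can run the argument there directly.
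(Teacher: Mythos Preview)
Your argument is correct and takes a genuinely different route from the paper's. The paper's proof is a two-line affair: assuming $x,\bar x$ are neither mutually separated nor right asymptotic, it observes (implicitly by splicing at a shared coordinate and invoking Corollary~\ref{cor:transition_equals_equivalence}) that $x\sim\bar x$, and then notes that two equivalent but non--right-asymptotic preimages in a finite-to-one triple immediately produce a diamond, contradicting finite-to-one-ness. You instead use diamond-freeness up front to pin the agreement set down as a discrete interval with maximum $r$, place $x,\bar x$ in a common class via Theorem~\ref{thm:mutually_separated}, and then run a pigeonhole count at a magic coordinate beyond $r$: the $d-1$ representatives of the other classes together with $x_k,\bar x_k$ force $d+1$ distinct symbols there, contradicting $d(w)=d$. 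The paper's route is shorter and stays entirely within the transition-class machinery just developed; yours makes the link to the classical magic-block picture explicit and yields a pleasingly concrete contradiction.

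One small correction to your closing caveat: you cannot actually sidestep the one-step, one-block hypothesis by ``running the argument in the given coordinates,'' since two of your three ingredients --- Theorem~\ref{thm:mutually_separated} and the very definition of a magic block --- are stated only under that hypothesis. This is not a defect peculiar to your argument (the paper's splicing step tacitly needs one-step as well, and the statement itself is coordinate-dependent), but the workaround you describe does not do what you claim.
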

\begin{proof}
First note that since $\pi$ is finite-to-one, $x$ and $\bar x$ must be right transitive. Suppose $x$ and $\bar x$ are neither mutually separated nor right asymptotic. Then by Corollary \ref{cor:transition_equals_equivalence} they are equivalent and therefore produce a diamond which implies that $\pi$ is not finite-to-one (see \cite[\S8.1]{LM}).
\end{proof}

\vspace{0.3cm}
\section{Symbol partition properties for a factor triple}

In this section, we provide further separation properties for transition classes over a right transitive point.
Throughout this section, we assume that $X$ is one-step and $\pi$ is one-block for a factor triple $(X,Y,\pi)$. Also in what follows, for a transition class $C$ and $A\subset\mathbb{Z}$, denote by $C|_A$ the set $\{ x_A : x \in C \}$. If $A=\{i\}$ for some integer $i$ then $C|_A$ is denoted simply by $C|_i$ for the convenience. With this notation, Theorem \ref{thm:mutually_separated} says that if $y \in Y$ is right transitive, then $C|_i \cap \bar C|_i = \emptyset$ for $C \neq \bar C \in \C(y)$ and all $i\in\setZ$.

We will first see that if $(w,n,M)$ is a minimal transition block and $y$ is right transitive, then $|C|_{i+n}\cap M|=1$ for any $C$ in $\C(y)$ and $i\in\mathbb{Z}$ with $y_{[i,i+|w|)}=w$.

\begin{lem}\label{lem:same_member}
    Let $(X,Y,\pi)$ be an irreducible factor triple. Let $y \in Y$ be right transitive with $y_{[i,i+|w|)}=w$ for some minimal transition block $(w,n,M)$. Given $C \in \C(y)$, there is a unique symbol $b$ in $M$ such that every point in $C$ is routable through $b$ at time $i+n$.
\end{lem}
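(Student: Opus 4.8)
\emph{Proof idea.} The statement has an existence and a uniqueness half, and the uniqueness half is immediate: if $b,b'\in M$ both had the property that every point of $C$ is routable through them at time $i+n$, then choosing any $x\in C$ the block $x_{[i,i+|w|)}\in\pi^{-1}(w)$ would be routable through both $b$ and $b'$, so $b=b'$ by Lemma~\ref{lem:unique_routability}. So the plan is to exhibit a single $b\in M$ through which every point of $C$ routes at time $i+n$.

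The crucial preliminary step is to make routability ``visible'' at the coordinate $i+n$. If $x\in\pi^{-1}(y)$ is routable through $m\in M$ at time $i+n$, witnessed by a block $v\in\pi^{-1}(w)$ with $v_0=x_i$, $v_{|w|-1}=x_{i+|w|-1}$ and $v_n=m$, I would form the point $\tilde x$ obtained from $x$ by overwriting the symbols on the window $[i,i+|w|)$ by $v$. Since $v_0=x_i$ and $v_{|w|-1}=x_{i+|w|-1}$ and $X$ is one-step, $\tilde x$ is again a point of $X$; since $\pi(v)=w=y_{[i,i+|w|)}$ we have $\pi(\tilde x)=y$; since $\tilde x$ agrees with $x$ on $(-\infty,i]$ and on $[i+|w|-1,\infty)$, it coincides with $x$ at infinitely many coordinates and hence $\tilde x\sim x$ by \cite{AllQ13}; and $\tilde x_{i+n}=v_n=m$ (note $0<n<|w|-1$, so $i+n$ lies strictly inside the overwritten window). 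Thus, within any transition class over $y$, each member $x$ may be replaced by an equivalent point whose literal symbol at coordinate $i+n$ equals the $M$-symbol through which $x$ routes.

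With this in hand the rest is a counting argument driven by Theorem~\ref{thm:mutually_separated}. Put $d=|M|$; since $(w,n,M)$ is a minimal transition block, $d=c^*_\pi$, and this equals $c_\pi$ by Theorem~\ref{thm:con} ($Y=\pi(X)$ being irreducible because $X$ is), while $|\C(y)|=c_\pi=d$ by Theorem~\ref{thm:classdegree} since $y$ is right transitive. Enumerate $\C(y)=\{C_1=C,C_2,\dots,C_d\}$, choose $z_k\in C_k$, let $m_k\in M$ be the unique symbol through which $z_k$ routes at time $i+n$, and pass to the visible points $\tilde z_k$ with $(\tilde z_k)_{i+n}=m_k$. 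As $\tilde z_1,\dots,\tilde z_d$ lie in pairwise distinct classes over the right transitive point $y$, Theorem~\ref{thm:mutually_separated} makes them pairwise mutually separated, so the symbols $m_1,\dots,m_d$ are pairwise distinct and $\{m_1,\dots,m_d\}=M$. Set $b=m_1$. For an arbitrary $x\in C=C_1$ with routing symbol $m\in M$ at time $i+n$ and associated visible point $\tilde x$, the points $\tilde x,\tilde z_2,\dots,\tilde z_d$ again lie in pairwise distinct classes over $y$, hence are pairwise mutually separated, so $m=\tilde x_{i+n}\neq(\tilde z_k)_{i+n}=m_k$ for every $k\ge 2$; as $\{m_2,\dots,m_d\}=M\setminus\{b\}$ and $m\in M$, this forces $m=b$. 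Hence every point of $C$ is routable through $b$ at time $i+n$.

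I expect the main obstacle to be the ``visible routability'' reduction of the second paragraph rather than the counting: a direct use of Theorem~\ref{thm:mutually_separated} only separates the literal symbols $z_{i+n}$, which need neither lie in $M$ nor record which symbol of $M$ a point routes through, so one needs a device turning the a priori weaker ``routing'' condition into an honest symbol at coordinate $i+n$. Overwriting the window $[i,i+|w|)$ by a witness block does this, and is legitimate precisely because the transition class of a preimage of a right transitive point is unchanged under finite alterations that keep it a preimage of $y$. The only other point to watch is that one uses the exact equality $|\C(y)|=|M|$, not just an inequality: this is what permits running the pigeonhole argument in reverse to single out $b$.
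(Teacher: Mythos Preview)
Your proof is correct and follows essentially the same route as the paper: realize routing symbols as actual symbols at coordinate $i+n$ via the overwriting trick, then combine Theorem~\ref{thm:mutually_separated} with the equality $|\C(y)|=|M|=c_\pi$ to pin down a unique $b\in M$ per class. The paper packages this more tersely by setting $M_C=M\cap C|_{i+n}$ and arguing $\sum_{C}|M_C|=|M|=c_\pi$ with each $M_C$ nonempty and the $M_C$'s disjoint, whereas you spell out the visible-routability step and run the pigeonhole pointwise; your uniqueness via Lemma~\ref{lem:unique_routability} is an alternative to the paper's cardinality argument, but both are immediate.
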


\begin{proof}
    Let $M_C=M\cap C|_{i+n}$ for any $C$ in $\C(y)$. Then $\bigcup_{C\in\C(y)}M_C=M$. As each $M_C$ is contained in $C|_{i+n}$, by Theorem \ref{thm:mutually_separated} the sets $M_C$'s are mutually disjoint. So $\sum_{C\in\C(y)}|M_C|=|M|=c_\pi$. Since $|\C(y)|=c_\pi$ and each $M_C$ is nonempty, we have that $|M_C|=1$ for each $C\in\C(y)$.
    \end{proof}

The following proposition intuitively states that given a right transitive point $y$ in $Y$, if two preimages of the symbol $y_0$ appear in two distinct transition classes over $y$ at time $0$, then these two symbols cannot ever appear at the same time in a single class over any right transitive point.

\begin{prop}\label{prop:symbols_once_separated_cannot_meet_again}
    Let $(X,Y,\pi)$ be an irreducible factor triple and let $y\in Y$ be a right transitive point with a minimal transition block in $y_{(-\infty,0)}$. Then for any right transitive point $z\in Y$ with $z_i= y_0$, $i\in\setZ$, and any given $D \in \C(z)$, we have $D|_i \cap C|_0\neq \emptyset$ for at most one $C \in \C(y)$.

\end{prop}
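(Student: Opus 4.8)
The plan is to argue by contradiction, transporting the whole configuration onto a single auxiliary right transitive point obtained by grafting the left tail of $y$ onto the right tail of $z$. First, after replacing $z$ by $\sigma^i z$ — which is again right transitive, satisfies $(\sigma^i z)_0 = y_0$, and has $(\sigma^i D)|_0 = D|_i$ — we may assume $i=0$. So suppose $D|_0\cap C_1|_0\neq\emptyset\neq D|_0\cap C_2|_0$ for distinct $C_1,C_2\in\C(y)$ and some $D\in\C(z)$; pick $a_j\in D|_0\cap C_j|_0$ and choose $x^{(j)}\in C_j$, $\zeta^{(j)}\in D$ with $x^{(j)}_0=\zeta^{(j)}_0=a_j$ for $j=1,2$. (Theorem \ref{thm:mutually_separated} applied to $y$ even forces $a_1\neq a_2$, but this is not needed.) The target is a contradiction with Lemma \ref{lem:same_member}.

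Define $q^{(j)}\in X^{\setZ}$ by $q^{(j)}_k=x^{(j)}_k$ for $k\le 0$ and $q^{(j)}_k=\zeta^{(j)}_k$ for $k\ge 0$; since $x^{(j)}_0=a_j=\zeta^{(j)}_0$ this is well defined, and since $X$ is one-step it is a point of $X$. Its image $\omega:=\pi(q^{(j)})$ is independent of $j$: it is the point of $Y$ with $\omega_k=y_k$ for $k\le 0$ and $\omega_k=z_k$ for $k\ge 0$ (well defined because $z_0=y_0$). Because $\omega$ agrees with $z$ on $[0,\infty)$ it is right transitive, and because $\omega$ agrees with $y$ on $(-\infty,0]$ it satisfies $\omega_0=y_0$ and it carries the hypothesised minimal transition block $(w,n,M)$ inside $\omega_{(-\infty,0)}$; say $\omega_{[a,b)}=y_{[a,b)}=w$ with $b\le 0$, and let $c=a+n<0$ be the corresponding magic coordinate.

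Next I claim $q^{(1)}$ and $q^{(2)}$ lie in a common transition class over $\omega$. Since $\zeta^{(1)},\zeta^{(2)}$ are in the same class $D$ over $z$, for each $N\ge 0$ there is $\eta\in\piinv(z)$ with $\eta_{(-\infty,N]}=\zeta^{(1)}_{(-\infty,N]}$ and $\eta_{[N',\infty)}=\zeta^{(2)}_{[N',\infty)}$ for some $N'\ge N$; concatenating $x^{(1)}_{(-\infty,0]}$, $\zeta^{(1)}_{[0,N]}$, $\eta_{[N,N']}$ and $\zeta^{(2)}_{[N',\infty)}$ — legitimate because $X$ is one-step and consecutive pieces share the symbol at each splice point ($a_1$ at $0$, $\zeta^{(1)}_N$ at $N$, $\zeta^{(2)}_{N'}$ at $N'$) — yields a point of $\piinv(\omega)$ equal to $q^{(1)}$ on $(-\infty,N]$ and to $q^{(2)}$ on $[N',\infty)$. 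Hence $q^{(1)}\to q^{(2)}$ over $\omega$, and symmetrically $q^{(2)}\to q^{(1)}$, so $q^{(1)},q^{(2)}$ lie in one class $D_\omega\in\C(\omega)$. By Lemma \ref{lem:same_member} there is a single symbol $e\in M$ through which every point of $D_\omega$ — in particular $q^{(1)}$ and $q^{(2)}$ — is routable at time $c$.

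Finally, push this back to $y$. Fix $j$ and reroute $q^{(j)}$ through $e$ on the window $[a,b)$: replace $q^{(j)}_{[a,b)}$ by a block of $X$ over $w$ that agrees with $q^{(j)}$ at the endpoints of $[a,b)$ and carries $e$ at $c$; this is a point of $\piinv(\omega)$ and, since $b\le 0$, the coordinate $0$ is untouched. Its restriction to $(-\infty,0]$ is a left ray over $y_{(-\infty,0]}$ carrying $e$ at $c$ and $a_j$ at $0$; gluing it to $x^{(j)}_{[0,\infty)}$ at coordinate $0$ (again legitimate by one-step-ness) produces $\tilde q^{(j)}\in\piinv(y)$ which agrees with $x^{(j)}$ on $[0,\infty)$, hence $\tilde q^{(j)}\sim x^{(j)}$ by Theorem \ref{thm:mutually_separated} (or the fact from \cite{AllQ13} that coincidence of two preimages of $y$ along coordinates tending to $+\infty$ forces equivalence), so $\tilde q^{(j)}\in C_j$, while $\tilde q^{(j)}_c=e$. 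Thus $e$ is the symbol of $M$ through which $C_j$ is routed at the occurrence $[a,b)$ of $w$ in $y$, for both $j=1$ and $j=2$; but Lemma \ref{lem:same_member} says distinct classes of $\C(y)$ are routed through distinct symbols of $M$, contradicting $C_1\neq C_2$. The step I expect to require the most care is verifying that all the spliced bi-infinite sequences really are points of $X$ mapping onto the intended points of $Y$ (which is exactly where the one-step hypothesis is used) and that the rerouting on $[a,b)$ leaves intact the symbol $a_j$ at coordinate $0$; the conceptual heart, though, is spotting that the hypothesis must be exploited through the single right transitive point $\omega$, which simultaneously sees the left tail of $y$ (hence the transition block and the classes $C_1,C_2$) and the right tail of $z$ (hence $D$ and the collapse of $a_1,a_2$).
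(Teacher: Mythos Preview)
Your proof is correct and follows essentially the same strategy as the paper: graft the left tail of $y$ onto the right tail of (a shift of) $z$ to get an auxiliary right transitive point, build two preimages of it from the given data, show they are equivalent using the equivalence of $\zeta^{(1)},\zeta^{(2)}$ in $D$, and then obtain a contradiction from the minimal transition block in the left tail via Lemma~\ref{lem:same_member}. The only cosmetic difference is the direction of the last step: the paper observes directly that the two glued points inherit routings through \emph{distinct} symbols of $M$ from $C_1,C_2$ and contradicts their equivalence, whereas you first extract a common symbol $e$ from their equivalence and then push it back into both $C_1$ and $C_2$; these are equivalent arguments.
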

\begin{proof}
    Let $y_0=a$. Suppose, on the contrary, that for some right transitive point $z \in Y$ with $z_i = a$, $i\in\setZ$, and $D \in \C(z)$, there are two symbols $b \in C|_0$ and $c \in \bar C|_0$, where $C \neq \bar C \in \C(y)$, occurring at the time $i$ in $D$, that is, $b, c \in D|_i$. Then there are two points $x^{(1)}$ and $x^{(2)}$ in $D$ with $x^{(1)}_i = b$ and $x^{(2)}_i = c$. As $b\in C|_0$ and $c\in\bar C|_0$, we have two points $\bar x^{(1)} \in C$ and $\bar x^{(2)} \in \bar C$ such that $\bar x^{(1)}_0 = b$ and $\bar x^{(2)}_0 = c$.

    Define new points $\hat x^{(l)}$ for $l = 1, 2$ by
    $$ \hat x^{(l)} = \bar x^{(l)}_{(-\infty,0)}.x^{(l)}_{[i,\infty)}$$
    and consider $\hat z = \pi(\hat x^{(1)}) = \pi(\hat x^{(2)})$. Then $\hat z$ is right transitive since it is right asymptotic to $\sigma^i(z)$, and the points $\hat x^{(l)}$'s are equivalent since $x^{(l)}$'s are equivalent.

   Recall that by assumption, a minimal transition block $(w,n,M)$ occurs in $y_{[j,j+|w|)}=\hat{z}_{[j,j+|w|)}$ for some $j \leq -|w|$. Since $\bar x^{(1)}$ and $\bar x^{(2)}$ are not equivalent then by Theorem \ref{thm:mutually_separated} they are routable through different members of $M$ at time $j+n$. It follows that $\hat x^{(1)}$ and $\hat x^{(2)}$ are routable through different members of $M$ at time $j+n$ which contradicts Lemma \ref{lem:same_member} since $\hat x^{(1)}$ and $\hat x^{(2)}$ are equivalent.
\end{proof}

 A finite-to-one factor code possesses a kind of permutation property, as discussed in \cite[\S 9]{LM}, for preimages of a magic symbol. Analogous to this property, the following proposition exhibits a permutation property for preimages of a minimal transition block.
\begin{prop}\label{cor:min_transit_blk_permutation}
    Let $(X,Y,\pi)$ be an irreducible factor triple of class degree $d$ and let $(w,n,M)$ be a minimal transition block. For each block $u = wvw \in \B(Y)$ for some $v$, there is a permutation $\tau_u : M \to M$ such that given any right transitive point $y$ in $Y$ with $y_{[0,|u|)} = u$ and $C \in \C(y)$, we have $\tau_u ( C|_{n} \cap M) = C|_{|wv|+n} \cap M$.
\end{prop}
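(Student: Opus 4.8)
The plan is to fix a right transitive $y$ with $y_{[0,|u|)}=u$, extract a permutation of $M$ realising the stated identity for that particular $y$, and then prove it does not depend on $y$; the independence is the substantive point.

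\textbf{A permutation for a fixed $y$.} Since $u=wvw$ begins and ends with $w$, the block $w$ occurs in $y$ at coordinates $0$ and $|wv|$, so $y_{[0,|w|)}=w$ and $y_{[|wv|,|wv|+|w|)}=w$ with $(w,n,M)$ a minimal transition block. By Lemma \ref{lem:same_member}, for each $C\in\C(y)$ both $C|_{n}\cap M$ and $C|_{|wv|+n}\cap M$ are singletons, say $\{b_C\}$ and $\{b'_C\}$. By Theorem \ref{thm:classdegree}, $|\C(y)|=c_\pi=|M|$, and by Theorem \ref{thm:mutually_separated} the sets $\{C|_{n}\}_{C\in\C(y)}$ are pairwise disjoint and likewise $\{C|_{|wv|+n}\}_{C\in\C(y)}$; hence $C\mapsto b_C$ and $C\mapsto b'_C$ are bijections of $\C(y)$ onto $M$. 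Thus $\tau_y:=(C\mapsto b'_C)\circ(C\mapsto b_C)^{-1}$ is a permutation of $M$ with $\tau_y(C|_{n}\cap M)=C|_{|wv|+n}\cap M$ for all $C$.

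\textbf{A block description and the reduction.} Routing the prefix (resp. suffix) $w$-block of a preimage of $u$ through $M$ and pasting it back shows, by minimality of $(w,n,M)$, that $(u,n,M)$ and $(u,|wv|+n,M)$ are also minimal transition blocks; so by Lemma \ref{lem:unique_routability} each $\tilde u\in\piinv(u)$ is routable through a unique $\rho(\tilde u)\in M$ at time $n$ and a unique $\rho'(\tilde u)\in M$ at time $|wv|+n$. I claim $\rho(x_{[0,|u|)})=b_{[x]}$ and $\rho'(x_{[0,|u|)})=b'_{[x]}$ for every $x\in\piinv(y)$: taking a routing witness $\bar u\in\piinv(u)$ for $x_{[0,|u|)}$ at time $n$, the point $x':=x_{(-\infty,0)}\,\bar u\,x_{[|u|,\infty)}$ lies in $X$ (as $X$ is one-step and $\bar u$ matches $x$ at coordinates $0$ and $|u|-1$), has $\pi(x')=y$, and is right asymptotic to $x$, so $x'\sim x$ and $\rho(x_{[0,|u|)})=x'_n\in[x]|_n\cap M=\{b_{[x]}\}$; the statement for $\rho'$ is symmetric. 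Hence $\tau_y$ is determined by the restriction of the intrinsic map $\tilde u\mapsto(\rho(\tilde u),\rho'(\tilde u))$ to $\{x_{[0,|u|)}:x\in\piinv(y)\}$, and since (using irreducibility of $X$ and the existence of a right transitive point of $X$) every $\tilde u\in\piinv(u)$ arises this way over some right transitive $y$ with $y_{[0,|u|)}=u$, it suffices to prove
\begin{equation*}
\rho(\tilde u_1)=\rho(\tilde u_2)\ \Longrightarrow\ \rho'(\tilde u_1)=\rho'(\tilde u_2)\qquad(\tilde u_1,\tilde u_2\in\piinv(u)),
\end{equation*}
as then $\tau_u:=\tilde u\mapsto(\rho(\tilde u),\rho'(\tilde u))$ is a well defined map $M\to M$ agreeing with every $\tau_y$, hence a permutation realising the asserted identity.

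\textbf{The crux.} Establishing the displayed implication is the main obstacle. Realising $\tilde u_1,\tilde u_2$ over right transitive points $y_1,y_2$ — which, by extending each to the left through a minimal transition block, we may take to contain a minimal transition block strictly before coordinate $0$ — in classes $C_1,C_2$, the task becomes: $b_{C_1}=b_{C_2}$ forces $b'_{C_1}=b'_{C_2}$. For $0\le j\le|wv|+n$ both $y_1$ and $y_2$ carry the symbol $u_j$ at coordinate $j$; I would track the relation $R_j\subseteq\C(y_1)\times\C(y_2)$ defined by $(\mathcal C_1,\mathcal C_2)\in R_j$ iff $\mathcal C_1|_j\cap\mathcal C_2|_j\ne\emptyset$. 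Applying Proposition \ref{prop:symbols_once_separated_cannot_meet_again} to $\sigma^j(y_1)$ and $\sigma^j(y_2)$ (this is why we arranged a minimal transition block before coordinate $0$) shows each $R_j$ is a partial matching; moreover $(C_1,C_2)\in R_{n}$ via the common symbol $b_{C_1}=b_{C_2}$, while $R_{|wv|+n}$ matches $C_1$ exactly to the class of $\C(y_2)$ carrying $b'_{C_1}$, so $b'_{C_1}\ne b'_{C_2}$ would give $(C_1,C_2)\notin R_{|wv|+n}$. Thus it is enough to show $R_n=R_{|wv|+n}$, i.e.\ that $R_j$ is unchanged as $j$ runs along the block $u_{[n,|wv|+n]}$ common to $y_1$ and $y_2$. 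I expect this propagation to be where the real work lies: passing a symbol shared at coordinate $j$ to one shared at coordinate $j+1$ in both $\piinv(y_1)$ and $\piinv(y_2)$ should be carried out by gluing $y_1$ and $y_2$ along a synchronizing word of the irreducible sofic shift $Y$ occurring far to the right, transplanting the relevant preimages across that synchronizing word (legitimate since $X$ is one-step), and then reading off the conclusion from Theorem \ref{thm:mutually_separated} and Lemma \ref{lem:same_member}.
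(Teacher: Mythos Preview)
Your setup for a fixed $y$ is exactly what the paper does, and your bijectivity argument via Theorem~\ref{thm:classdegree} and Theorem~\ref{thm:mutually_separated} is fine. The reduction to the block-level implication $\rho(\tilde u_1)=\rho(\tilde u_2)\Rightarrow\rho'(\tilde u_1)=\rho'(\tilde u_2)$ is also legitimate, though not needed.

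Where you go astray is in the ``crux''. You treat the independence of $\tau_y$ from $y$ as a hard step requiring a propagation argument for the relations $R_j$ together with a synchronizing-word transplant, but the paper disposes of it in three lines by a direct gluing. Concretely: having fixed $y$ and a point $x\in C$ with $x_n=a$ and $x_{|wv|+n}=b$, take any other right transitive $\bar y$ with $\bar y_{[0,|u|)}=u$, any $\bar x\in\pi^{-1}(\bar y)$ with $\bar x_n=a$, and any $x'\in\pi^{-1}(\bar y)$ with $x'_{|wv|+n}=b$ (such $x'$ exists because $b\in M$ is realized at every occurrence of $w$). Since $y$ and $\bar y$ agree on $[0,|u|)$, the middle segment $x_{(n,|wv|+n)}$ is a legal bridge, and
\[
\bar x_{(-\infty,n]}\,x_{(n,|wv|+n)}\,x'_{[|wv|+n,\infty)}
\]
is a point of $\pi^{-1}(\bar y)$ left asymptotic to $\bar x$ and right asymptotic to $x'$. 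Corollary~\ref{cor:transition_equals_equivalence} gives $\bar x\sim x'$, hence $b\in[\bar x]|_{|wv|+n}\cap M$ and $\tau_{\bar y}(a)=b$. That is the whole independence argument.

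Your proposed route has an actual soft spot beyond being longer: a synchronizing word for the sofic shift $Y$ synchronizes follower sets \emph{in $Y$}, but says nothing about $\pi$-preimages in $X$, so ``transplanting the relevant preimages across that synchronizing word'' does not obviously go through. What would do the job is exactly a minimal transition block (or magic block) of $\pi$, and once you invoke that you are essentially back to the paper's one-line gluing, rendering the $R_j$ machinery superfluous.
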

\begin{proof}

  Let $y$ be a right transitive point in $Y$ with $y_{[0,|u|)}=u$. Given $C$ in $\C(y)$, by Lemma \ref{lem:same_member}, we have $C|_n\cap M=\{a\}$ and $C|_{|wv|+n}\cap M=\{b\}$ for some symbols $a$ and $b$. Define a permutation $\tau_{u,y}$ of $M$ by $\tau_{u,y}(a)=b$, and let $x\in C$ be a point with $x_n=a$ and $x_{|wv|+n}=b$.

  We show that $\tau_{u,y}$ does not depend on $y$. Let $\bar y$ be a right transitive point in $Y$ with $\bar y_{[0,|u|)}=u$, and let $\bar x$ be a point in $\piinv(\bar y)$ with $\bar x_n=a$. Consider a point $x'\in\piinv(\bar y)$ with $x'_{|wv|+n}=b$. Then the point $\bar x_{(-\infty,n]}{x}_{(n,|wv|+n)}x'_{[|wv|+n,\infty)}\in\piinv(\bar y)$ is left asymptotic to $\bar x$ and right asymptotic to $x'$, which implies that $\bar x$ and $x'$ are equivalent. It follows that $\tau_{u,\bar y}(a)=b$, and thus $\tau_{u,y}=\tau_{u,\bar y}$ as $C$ was chosen arbitrarily.
%
\end{proof}

Theorem \ref{thm:block_partition} informally states that given any irreducible factor triple $(X,Y,\pi)$, in order to determine whether two points with the same image are equivalent or not, one only needs to locally compare the preimages of a magic block which occur within these points. More precisely, there is a partition on the set of preimages of a magic block $w$ of $\pi$ such that given all right transitive points $y$ in $Y$ and any $i\in\mathbb{Z}$ with $y_{[i,i+|w|)}=w$, two preimages $x,x'$ of $y$ are equivalent if and only if $x_{[i,i+|w|)}$ and $x'_{[i,i+|w|)}$ belong to the same class of the partition on $\piinv(w)$.

\begin{thm}\label{thm:block_partition}
    Let $(X,Y,\pi)$ be an irreducible factor triple of class degree $d$. Let $w$ be a magic block of $\pi$. There is a partition of $\pi^{-1}(w)$ into $d$ subsets $\mathcal{B}_1,\cdots,\mathcal{B}_{d}$ such that for any doubly transitive $z$ in $Y$ and any $i \in \setZ$ with $z_{[i,i+|w|)} = w$, we have a bijection $\rho_{z,i} : \C(z) \to \{1,\dots, d\}$ with $D|_{[i,i+|w|)} \subseteq\mathcal{B}_{\rho_{z,i}(D)}$ for each $D \in \C(z)$.
\end{thm}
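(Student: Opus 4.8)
The plan is to reduce the theorem to the behaviour of preimages of $w$ at one fixed magic coordinate $k$ of $w$, and then to show that this behaviour is the same over every doubly transitive point of $Y$. Write $M_w=\{u_k:u\in\pi^{-1}(w)\}$, so $|M_w|=d(w)$, and recall the defining property of a magic coordinate: whenever $y\in Y$ and $i\in\setZ$ satisfy $y_{[i,i+|w|)}=w$, one has $\{x_{i+k}:x\in\pi^{-1}(y)\}=M_w$. Call a pair $(z,i)$ \emph{admissible} if $z\in Y$ is doubly transitive and $z_{[i,i+|w|)}=w$; such pairs exist, since any $u\in\pi^{-1}(w)$ extends inside the irreducible shift of finite type $X$ to a doubly transitive point, whose image under the factor code $\pi$ is doubly transitive.

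The first point is that for each admissible pair $(z,i)$ the family $\{\,D|_{i+k}:D\in\C(z)\,\}$ is a partition of $M_w$ into exactly $d$ nonempty parts: each part is nonempty because classes are; any two parts are disjoint because distinct transition classes over the right transitive point $z$ are mutually separated (Theorem~\ref{thm:mutually_separated}); the union is $\{x_{i+k}:x\in\pi^{-1}(z)\}=M_w$; and there are $|\C(z)|=c_\pi=d$ of them by Theorem~\ref{thm:classdegree}. The whole statement then follows once we prove the crucial point: \emph{this partition of $M_w$ is independent of the choice of admissible pair $(z,i)$.}

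To prove that, I would argue by contraposition: if $a,b\in M_w$ lie in distinct parts for one admissible pair, they lie in distinct parts for every admissible pair. Given an admissible pair separating $a$ and $b$, translate it so that the relevant magic coordinate sits at $0$; this yields a doubly transitive $y\in Y$ with $y_{[-k,-k+|w|)}=w$ (so $y_0=w_k$), with $a\in C|_0$ and $b\in C'|_0$ for distinct $C,C'\in\C(y)$, and — because $y$ is left transitive — with a minimal transition block occurring entirely inside $y_{(-\infty,0)}$. Now let $(z',i'')$ be any admissible pair and set $i=i''+k$, so that $z'_i=w_k=y_0$ and, again by the magic property, $M_w=\{x_i:x\in\pi^{-1}(z')\}$; hence $a$ and $b$ each lie in exactly one part of the $(z',i'')$-partition. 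If they lay in the same part $D|_i$ for some $D\in\C(z')$, then $D|_i$ would meet both $C|_0$ and $C'|_0$, contradicting Proposition~\ref{prop:symbols_once_separated_cannot_meet_again}. This proves the claim, so there is a single partition $M_w=N_1\sqcup\cdots\sqcup N_d$ realized by every admissible pair.

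Finally I would set $\B_\ell=\{u\in\pi^{-1}(w):u_k\in N_\ell\}$ for $\ell=1,\dots,d$; since $M_w=\bigsqcup_\ell N_\ell$ and every symbol of $M_w$ equals $u_k$ for some $u\in\pi^{-1}(w)$, these form a partition of $\pi^{-1}(w)$ into $d$ nonempty sets. For any doubly transitive $z$ and any $i$ with $z_{[i,i+|w|)}=w$, the pair $(z,i)$ is admissible, so its partition of $M_w$ equals $\{N_1,\dots,N_d\}$; thus each $D\in\C(z)$ satisfies $D|_{i+k}=N_{\rho_{z,i}(D)}$ for a unique index $\rho_{z,i}(D)$, the map $\rho_{z,i}:\C(z)\to\{1,\dots,d\}$ is a bijection (it is $D\mapsto D|_{i+k}$ followed by the labelling of the $N_\ell$), and every $x\in D$ has $x_{i+k}\in N_{\rho_{z,i}(D)}$, whence $x_{[i,i+|w|)}\in\B_{\rho_{z,i}(D)}$, that is $D|_{[i,i+|w|)}\subseteq\B_{\rho_{z,i}(D)}$. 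The main obstacle is the independence claim of the third paragraph: it requires choosing the translation so that Proposition~\ref{prop:symbols_once_separated_cannot_meet_again} applies, and it is exactly there that double (as opposed to merely right) transitivity is used, in order to place a minimal transition block to the left of the magic coordinate.
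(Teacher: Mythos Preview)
Your proof is correct and follows essentially the same route as the paper's: both fix a magic coordinate $k$, use Theorem~\ref{thm:mutually_separated} to see that the sets $D|_{i+k}$ partition $M_w=\{u_k:u\in\pi^{-1}(w)\}$ into $d$ pieces, invoke Proposition~\ref{prop:symbols_once_separated_cannot_meet_again} to show this partition is independent of the doubly transitive point, and then pull back along $u\mapsto u_k$ to obtain the partition of $\pi^{-1}(w)$. Your contrapositive phrasing of the independence step and your explicit translation to place a minimal transition block to the left are slightly more careful than the paper's presentation, but the underlying argument is the same.
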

\begin{proof}
    Let $k$ be a magic coordinate of $w$ and $\mathcal{A}_{w,k}=\{u_k\mid u\in\pi^{-1}(w)\}$. Let $y$ be a doubly transitive point in $Y$ with $y_{[0,|w|)}=w$. For $C\in\C(y)$ let
    $\mathcal{B}_C=\{u\in\pi^{-1}(w)\mid u_k\in C|_{k}\}.$ List $\C(y)=\{C^{(1)},\cdots,C^{(d)}\}$ and denote $\mathcal{B}_j=\mathcal{B}_{C^{(j)}}$. We claim that $\{\mathcal{B}_j\colon 1\leq j\leq d\}$ is a desired partition.

    Note that since $k$ is a magic coordinate of $w$, we have $\bigcup_{C \in \C(y)} C|_k = \mathcal{A}_{w,k}$ and hence $\bigcup_{C\in\C(y)}\mathcal{B}_C=\pi^{-1}(w)$. Moreover, $\mathcal{B}_C$'s with $C\in\C(y)$ are mutually disjoint, since by Theorem \ref{thm:mutually_separated} the sets $C|_{k}$'s with $C\in\C(y),$ are mutually disjoint. 
    
    Now we show that such a partition does not depend on $y$. Let $z$ be a doubly transitive point in $Y$ with $z_{[i,i+|w|)}=w$ for some $i\in\mathbb{Z}$. By considering a point $\sigma^i (z)$ we may assume that $i = 0$.
    For each $D\in\C(z)$, let $\mathcal{B}_D^{(z)}=\{u\in\pi^{-1}(w)\mid u_{k}\in D|_{k}\}$. Again since $k$ is a magic coordinate of $w$, we have $\bigcup_{D \in \C(z)} D|_k = \mathcal{A}_{w,k}$ and thus $\bigcup_{D\in\C(z)}\mathcal{B}_D^{(z)}=\pi^{-1}(w)$. It follows that for each $D \in \C(z)$ there is a transition class $C\in\C(y)$ such that $C|_k\cap D|_{k}\neq\emptyset,$ which implies that $\mathcal{B}_C\cap\mathcal{B}_D^{(z)}\neq\emptyset.$ We show that $\mathcal{B}_C=\mathcal{B}_D^{(z)}$. Suppose not. First assume that there is a block $u$ in $\mathcal{B}_C\setminus\mathcal{B}_D^{(z)}$. There must be another class $D' \in \C(z)$ such that $u_k\in D'|_{k}$. This means $C|_k$ intersects both $D|_{k}$ and $D'|_{k}$ which contradicts Proposition \ref{prop:symbols_once_separated_cannot_meet_again}. So $\mathcal{B}_C\subseteq\mathcal{B}_D^{(z)}$. The case $u\in\mathcal{B}_D^{(z)}\setminus\mathcal{B}_C$ is also impossible by the 
symmetric argument, and hence we have $\mathcal{B}_C=\mathcal{B}_D^{(z)}$.

    Define $\rho_{z,i}:\C(z)\to\{1,\cdots,d\}$ to send each $D\in\C(z)$ to the unique $1\le\rho_{z,i}(D)\le d$ with $\mathcal{B}^{(\sigma^i(z))}_{D}=\mathcal{B}_{\rho_{z,i}(D)}$. By definition, $D|_{[i,i+|w|)}$ is contained in $\mathcal{B}_{\rho_{z,i}(D)}$.
\end{proof}

Note that in general, the partition in Theorem \ref{thm:block_partition} need not be unique. However, 
in some special cases, for example when $\pi$ has a magic symbol, we obtain the uniqueness of the partition. The following remark which follows directly from Proposition \ref{prop:symbols_once_separated_cannot_meet_again} explains such cases.

\begin{rem}\label{rem:symbol-partition}
Let $(X,Y,\pi)$ be an irreducible factor triple of class degree $d$ and let $a$ be in $\A(Y)$. If for some doubly transitive point $y$ in $Y$ we have $y_0=a$ and $\piinv(y)|_0=\piinv(a)$ then there is a unique partition of $\pi^{-1}(a)$ into $d$ subsets $\B_1,\cdots,\B_{d}$ such that for any right transitive $z \in Y$ with $z_i = a$, we have a bijection $\rho_{z,i}: \C(z) \to \{ 1, \cdots, d \}$ with $C|_i \subseteq\B_{\rho_{z,i}(C)}$ for each class $C$ in $\C(z)$. 
\end{rem}
The following example shows that the partition property stated in Remark~\ref{rem:symbol-partition} need not hold for every symbol.

\def\vrtd{1.0cm}\def\hrzd{2cm}

\begin{figure}
  \begin{tikzpicture}[->,>=stealth',shorten >=1pt,auto,on grid,semithick,inner sep=2pt,bend angle=45,state/.style]
    \node[state] (m1) {$m_1$};
    \node[state] (alp0) [above right=2*\vrtd and \hrzd of m1] {$\alpha_0$};
    \node[state] (alp1bet1) [below=\vrtd of alp0] {};
    \node[black] (a1b1) at (alp1bet1) {$\alpha_1+\beta_1$};
    \node[state] (bet2gmm2) [below=\vrtd of alp1bet1] {};
    \node[black] (b2g2) at (bet2gmm2) {$\beta_2+\gamma_2$};
    \node[state] (gmm3) [below=\vrtd of bet2gmm2] {$\gamma_3$};
    \node[state] (0) [right=\hrzd of alp0] {$0$};
    \node[state] (1) [right=\hrzd of alp1bet1] {$1$};
    \node[state] (2) [right=\hrzd of bet2gmm2] {$2$};
    \node[state] (3) [right=\hrzd of gmm3] {$3$};
    \node[state] (gmm0') [right=\hrzd of 0] {$\gamma_0'$};
    \node[state] (alp1'bet1') [below=\vrtd of gmm0'] {};
    \node[black] (a1b1') at (alp1'bet1') {$\alpha_1'+\beta_1'$};
    \node[state] (bet2'gmm2') [below=\vrtd of alp1'bet1'] {};
    \node[black] (b2g2') at (bet2'gmm2') {$\beta_2'+\gamma_2'$};
    \node[state] (gmm3') [below=\vrtd of bet2'gmm2'] {$\gamma_3'$};
    \node[state] (m2) [right=\hrzd of gmm0'] {$m_2$};
    \node[state] (m1') [right=\hrzd of bet2'gmm2'] {$m_1$};

    \node[state] (X) [below left=2*\vrtd and \hrzd of m1] {$X:$};

    \node[state] (m2') [below=4*\vrtd of m1] {$m_2$};
    \node[state] (alp3) [below=\vrtd of gmm3] {$\alpha_3$};
    \node[state] (alp4) [below=\vrtd of alp3] {$\alpha_4$};
    \node[state] (bet5) [below=\vrtd of alp4] {$\beta_5$};
    \node[state] (gmm6) [below=\vrtd of bet5] {$\gamma_6$};
    \node[state] (gmm7) [below=\vrtd of gmm6] {$\gamma_7$};
    \node[state] (4) [right=\hrzd of alp4] {$4$};
    \node[state] (5) [right=\hrzd of bet5] {$5$};
    \node[state] (6) [right=\hrzd of gmm6] {$6$};
    \node[state] (7) [right=\hrzd of gmm7] {$7$};
    \node[state] (alp3') [below=\vrtd of gmm3'] {$\alpha_3'$};
    \node[state] (bet4') [below=\vrtd of alp3'] {$\beta_4'$};
    \node[state] (alp5'bet5'gmm5') [below=\vrtd of bet4'] {};
    \node[black] (a5b5g5') at (alp5'bet5'gmm5') {$\beta_5'+\alpha_5'+\gamma_5'$};
    \node[state] (bet6'gmm6') [below=\vrtd of alp5'bet5'gmm5'] {};
    \node[black] (b6g6') at (bet6'gmm6') {$\beta_6'+\gamma_6'$};
    \node[state] (alp7') [below=\vrtd of bet6'gmm6'] {$\alpha_7'$};
    \node[state] (m2_) [right=\hrzd  of a5b5g5'] {$m_2$};
    \node[state] (m1_) [right=\hrzd of alp7'] {$m_1$};

    \node[state] (Y) [below left=4*\vrtd and \hrzd of m2'] {$Y:$};

    \node[state] (m) [below=4*\vrtd of m2'] {$m$};
    \node[state] (alpbetgmm) [right=\hrzd of m] {};
    \node[black] (abg) at (alpbetgmm) {$\alpha+\beta+\gamma$};
    \node[state] (a) [right=\hrzd of abg] {$a$};
    \node[state] (alp'bet'gmm') [right=\hrzd of a] {};
    \node[black] (abg') at (alp'bet'gmm') {$\alpha'+\beta'+\gamma'$};
    \node[state] (m') [right=\hrzd of abg'] {$m$};

    \draw (0.5*\hrzd,-6*\vrtd) to [line to] node {$\pi$} (0.5*\hrzd,-7.5*\vrtd);

    \path
      (m1) edge (alp0) edge (a1b1) edge (b2g2) edge (gmm3)
      (alp0) edge (0)
      (a1b1) edge (1)
      (b2g2) edge (2)
      (gmm3) edge (3)
      (0) edge (gmm0')
      (1) edge (a1b1')
      (2) edge (b2g2')
      (3) edge (gmm3')
      (gmm0') edge (m2)
      (a1b1') edge (m1')
      (b2g2') edge (m1')
      (gmm3') edge (m1')

      (m2') edge (alp3) edge (alp4) edge (bet5) edge (gmm6) edge (gmm7)
      (alp3) edge (3)
      (alp4) edge (4)
      (bet5) edge (5)
      (gmm6) edge (6)
      (gmm7) edge (7)
      (3) edge (alp3')
      (4) edge (bet4')
      (5) edge (a5b5g5')
      (6) edge (b6g6')
      (7) edge (alp7')
      (alp3') edge (m2_)
      (bet4') edge (m2_)
      (a5b5g5') edge (m2_)
      (b6g6') edge (m2_)
      (alp7') edge (m1_)

      (m) edge (abg)
      (abg) edge (a)
      (a) edge (abg')
      (abg') edge (m');
  \end{tikzpicture}
  \caption{Graph of Example~\ref{ex:no-partition}}
\end{figure}
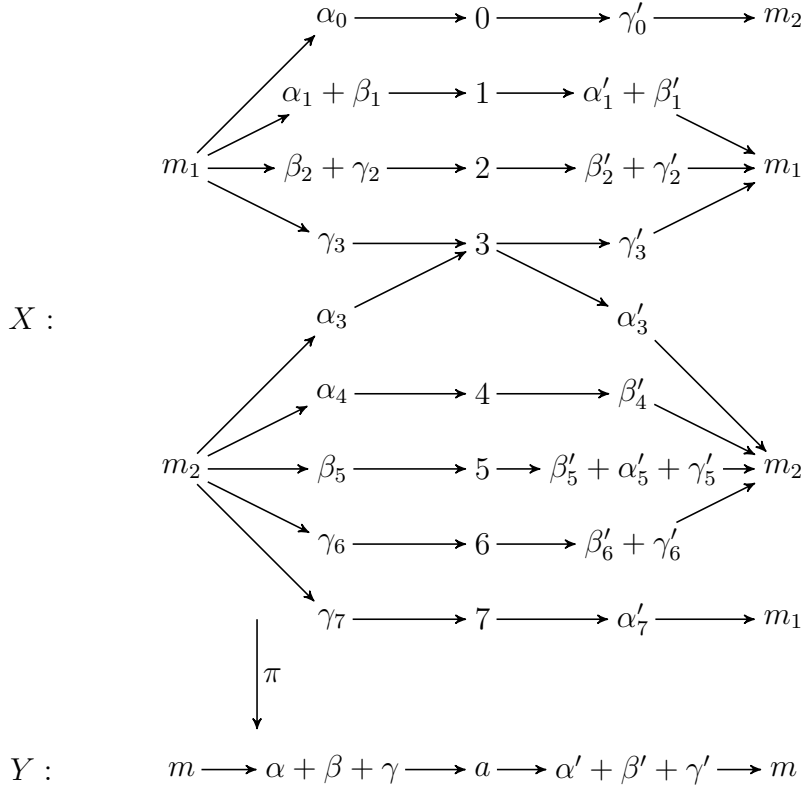

\begin{ex}\label{ex:no-partition}
    Consider the irreducible vertex shifts $X$ and $Y$ displayed in Figure 1. In the graphs of $X$ and $Y$, each pair of leftmost and rightmost vertices with the same symbol ($m_1, m_2$ and $m$) is identified. Let $\pi$ be the map erasing the subscripts, and sending all numbered vertices of $X$ to the symbol $a$.

    A point $y$ in $Y$ is of the form $(m (\alpha + \beta + \gamma) a (\alpha' + \beta' + \gamma') )^\infty$ where the notation $\alpha + \beta + \gamma$ implies that one can choose freely either $\alpha$ or $\beta$ or $\gamma$ as a symbol to appear in the given position (this is a standard notion in the theory of formal languages). The choices may differ up to positions so that a word like $m\alpha a\beta' m\beta a \gamma'$ is allowed. If $ u = m \delta a \delta' m$ with $\delta \in \{\alpha, \beta, \gamma\}$ and $\delta' \in \{ \alpha', \beta', \gamma' \}$, this $u$ defines a \emph{unique permutation} $\tau_u$ of $\{m_1, m_2 \}$: If $m_1$ and $u$ are given, for any preimage $v$ of $u$ with $v_0 = m_1$, the last symbol of $v$ is uniquely determined. Similarly for $m_2$. Note that this property is inductively true for general blocks of the form $u = m \delta^{(1)} a \delta'^{(1)} m \cdots m \delta^{(k)} a \delta'^{(k)} m$. This means that for each $y$ in $Y$ and $x$ in $\pi^{-1}(y)$ there are only two 
choices of putting $m_i$'s in $x$. Hence, there are exactly two transition classes over $y$. It follows that every point in $Y$ has 2 transition classes and the class degree of $\pi$ is 2. Note that $m$ is a magic symbol, and also, it is a minimal transition block.

    Now consider the symbol $a \in \mathcal{A}(Y)$ and consider the sets $C|_0$ with $C \in \C(y)$, with $y$ right or doubly transitive.
\begin{enumerate}
    \item If $y = \cdots m\alpha .a \alpha'm \cdots$, then we have $\{1\}$ and $\{3\}$ as $C|_0$'s in Proposition \ref{prop:symbols_once_separated_cannot_meet_again}.
    \item If $y = \cdots m\beta .a \beta'm \cdots$, then we have $\{1, 2\}$ and $\{5\}$.
    \item If $y = \cdots m\gamma .a \gamma'm \cdots$, then we have $\{2, 3\}$ and $\{6\}$.
    \item If $y = \cdots m\delta .a \delta'm \cdots$, where $(\delta, \delta')$ is not a pair in the above cases, then we have each $C|_0$ is a singleton.
\end{enumerate}
Symbols $1,2$ appear together in one class, as do symbols $2,3$; however, symbols $1,3$ appear in distinct classes. So, there is no partition of $\piinv(a)$ into 2 sets satisfying the property stated in Remark \ref{rem:symbol-partition}.
\end{ex}

\vspace{0.2cm}
\section{Applications and examples}\label{sec:applications}

The definitions of transition, transition classes, and the class degree are asymmetric. So it is natural to consider reversed transition as follows: $x \to_r \bar x$ if for each integer $n$, there is a point which is left asymptotic to $x$ and equal to $\bar x$ in $[n,\infty)$. We say that $x \sim_r \bar x$ if and only if $x \to_r \bar x$ and $\bar x \to_r x$. The \emph{reversed transition classes} of a point in $Y$ are the equivalence classes made by this new relation $\sim_r$. Denote by $[x]_r$ the reversed transition class containing $x$. Let $x^\tr$ be the point that $(x^\tr)_i = x_{-i}$ for every integer $i$ and define $X^\tr = \{x^\tr : x \in X\}$. Note that the reversed transition classes of a point $y$ can be obtained from the transition classes of a point $y^\tr$ under the \emph{transposed code} $\pi^\tr : X^\tr \to Y^\tr$. All the results in the previous sections hold for the reversed transition classes if we replace left transitive with right transitive.

Note that the set of transition classes and that of the reversed transition classes of a point $y \in Y$ need not coincide, nor do they need to have the same cardinality. However, they do coincide for almost all points (Proposition~\ref{prop:reversed_transition_class_is_same_to_transition_class}).

\begin{ex}\label{ex:reversed}
    We show that given any finite-to-one irreducible factor triple $(X,Y,\pi)$ which is not bi-closing, there are uncountably many points in $Y$ for which the number of transition classes differs from the number of reversed transition classes.

    Suppose, without loss of generality, that $\pi$ is not \emph{left closing}; i.e., there are two distinct points $x\neq \bar x \in X$ which are right asymptotic and $y = \pi(x) = \pi(\bar x)$. Since the subshift $X$ is of finite type and $x_{[i,\infty)}= \bar x_{[i,\infty)}$ for some $i$, by changing the common right tail we may assume that $x$ and $\bar x$ are right transitive. Then $y$ is also right transitive and hence $y$ has exactly $c_\pi$ transition classes. Since $\pi$ does not have any diamond, $x$ and $\bar x$ are not equivalent with respect to the reversed transition relation. Moreover, since any given two points $z$ and $\bar z$ from distinct transition classes over $y$ are mutually separated, they are also not equivalent with respect to the reversed transition relation. It follows that the number of reversed transition classes of $y$ is at least $c_\pi+1$. Since the right tail of $x$ can be changed in uncountably many ways in order to produce a right transitive point, we have uncountably many 
points in $Y$ with a different number of transition classes than the number of its reversed transition classes.
\end{ex}

\begin{prop}\label{prop:reversed_transition_class_is_same_to_transition_class}
Let $(X,Y,\pi)$ be an irreducible factor triple. If $y$ is doubly transitive and $x \in \pi^{-1}(y)$, then the transition class of $x$ equals the reversed transition class of $x$.
\end{prop}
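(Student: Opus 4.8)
The plan is to reduce the claim to Corollary~\ref{cor:transition_equals_equivalence} --- applied both to $\pi$ itself and, via the transpose, to $\pi^\tr:X^\tr\to Y^\tr$ --- together with the symmetry of the relations $\sim$ and $\sim_r$. Fix a doubly transitive $y\in Y$ and $x\in\piinv(y)$; it is enough to prove that $x\sim\bar x$ if and only if $x\sim_r\bar x$ for every $\bar x\in\piinv(y)$, since that says precisely $[x]=[x]_r$.

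I would first treat the forward side. Since $y$ is doubly transitive it is right transitive, so Corollary~\ref{cor:transition_equals_equivalence} applies and shows that $x\sim\bar x$ exactly when there is a point $z\in\piinv(y)$ which is left asymptotic to $x$ and right asymptotic to $\bar x$. As $\sim$ is symmetric, this is in turn equivalent to the existence of a point $z'\in\piinv(y)$ that is left asymptotic to $\bar x$ and right asymptotic to $x$.

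Next I would treat the reversed side. As observed at the start of this section, the reversed transition classes over $y$ are exactly the transition classes over $y^\tr$ for the irreducible factor triple $(X^\tr,Y^\tr,\pi^\tr)$, carried back by $x\mapsto x^\tr$; equivalently, Corollary~\ref{cor:transition_equals_equivalence} holds for the reversed relation once ``left transitive'' is replaced by ``right transitive''. Since $y$ is doubly transitive it is left transitive, so this reversed form applies and gives that $x\sim_r\bar x$ exactly when there is a point $z\in\piinv(y)$ which is right asymptotic to $x$ and left asymptotic to $\bar x$. This is word for word the condition obtained at the end of the previous paragraph, whence $x\sim\bar x\iff x\sim_r\bar x$, and therefore $[x]=[x]_r$.

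The only step requiring care is the transpose bookkeeping in the third paragraph: tracking that ``left asymptotic to $x^\tr$'' corresponds to ``right asymptotic to $x$'' and ``right asymptotic to $\bar x^\tr$'' to ``left asymptotic to $\bar x$'', and checking that the hypotheses of Corollary~\ref{cor:transition_equals_equivalence} are met in both invocations --- $y$ right transitive for $\pi$, and $y^\tr$ right transitive (i.e. $y$ left transitive) for $\pi^\tr$ --- both of which are supplied exactly by the double transitivity of $y$. I do not expect any essential new obstacle here, since the substantive work was already carried out in Theorem~\ref{thm:mutually_separated} and Corollary~\ref{cor:transition_equals_equivalence}.
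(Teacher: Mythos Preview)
Your proof is correct and follows essentially the same route as the paper's: both reduce the statement to the asymptotic-point characterization of $\sim$ in Corollary~\ref{cor:transition_equals_equivalence}, applied in tandem with its reversed analogue for $\pi^\tr$. The paper's argument is phrased as a short contradiction---taking $\bar x\in[x]_r\setminus[x]$, extracting the asymptotic witness $z$ directly from the definition of $\sim_r$, and then invoking Corollary~\ref{cor:transition_equals_equivalence} once to force $\bar x\sim x$ (with the other containment handled by the announced symmetry)---whereas you give a direct biconditional; but the substance is identical.
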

\begin{proof}
    Suppose, on the contrary that $[x] \neq [x]_r$. We may assume that there is a point $\bar x \in [x]_r \setminus [x]$. Since $\bar x \sim_r x$, there is a point $z \in \pi^{-1}(y)$ which is left asymptotic to $\bar x$ and right asymptotic to $x$. Then by Corollary \ref{cor:transition_equals_equivalence}, $[x] = [z] = [\bar x]$ and therefore $\bar x \in [x]$, which is a contradiction.
\end{proof}

Recall Definition \ref{defn:(X,v)_diamond}: Let $(X,Y,\pi)$ be a factor triple and $\bar X$ a proper subshift of $X$ with $\pi(\bar X)=Y$. Let $\bar v$ be in $\B(X) \setminus \B(\bar X)$. A block $u$ in $\mathcal{B}(\bar X)$ and a block $v$ in $\B(X)$ form an \emph{$(\bar X,\bar v)$-diamond} if $\pi(u)=\pi(v)$, $\bar v$ is a subblock of $v$, and $u$ and $v$ share the same initial symbol and the same terminal symbol. Lemma \ref{lem:modified_yoo} states that if $X$ is irreducible, one-step and $\pi$ is one-block then for each block $\bar v$ in  $\B(X) \setminus \B(\bar X)$ there is an $(\bar X,\bar v)$-diamond.

As mentioned before, we strengthen Lemma \ref{lem:modified_yoo} in the present section. Proposition \ref{prop:modified_yoo_with_upper_bound} gives an upper bound for the length of $(\bar X,\bar v)$-diamond of a given word $\bar v$ in $\mathcal{B}(X)\setminus\mathcal{B}(\bar X)$. Note that the proof of Proposition \ref{prop:modified_yoo_with_upper_bound} employs Theorem \ref{thm:contain_transition_pt} which was shown using Lemma \ref{lem:modified_yoo}.

\begin{prop}\label{prop:modified_yoo_with_upper_bound}
    Let $(X,Y,\pi)$ be an irreducible factor triple with $X$ one-step and $\pi$ one-block. Let $\bar X$ be a proper subshift of $X$ with $\pi(\bar X)=Y$. Then there is a positive integer $N$ such that for each block $\bar v$ in  $\mathcal{B}(X)\setminus\mathcal{B}(\bar X)$ we have an $(\bar X,\bar v)$-diamond of length less than $|\bar v| + N$.
\end{prop}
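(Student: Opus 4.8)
The plan is to prove the uniform bound by a single construction that, given $\bar v$, wraps it in a bounded ``collar'' made of preimages of a minimal transition block and then uses the transition‑class structure to merge this collar with $\bar X$‑behaviour, so that the overhead $N$ depends only on the collar and not on $\bar v$. Concretely, first I would fix, once and for all: an integer $R$ such that any two symbols of $X$ are joined by a path of length $\le R$ in the graph of $X$ (possible since $X$ is irreducible); a minimal transition block $(w_0,n_0,M_0)$ of $\pi$, which exists by Theorem~\ref{thm:con}; a right transitive point $y\in Y$ and, using $\pi(\bar X)=Y$, a point $\bar x\in\bar X$ with $\pi(\bar x)=y$, whose transition class I call $C=[\bar x]\in\C(y)$; and, by Theorem~\ref{thm:contain_transition_pt}, a right transitive point $x_*$ in $C$, so that $\bar x\sim x_*$. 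I would then let $N$ be a constant depending only on $|w_0|$ and $R$ (e.g.\ $N=2|w_0|+2R+3$).

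Given $\bar v\in\B(X)\setminus\B(\bar X)$, set $L=|\bar v|$ and build a block $\mathtt H=\mathtt p\,\beta_1\,\bar v\,\beta_2\,\mathtt q\in\B(X)$ of length $\le L+2|w_0|+2R$, where $\mathtt p,\mathtt q$ are fixed preimages of $w_0$ and $\beta_1,\beta_2$ are bridges of length $\le R$ joining them to the two ends of $\bar v$ (here irreducibility of $X$ enters). Since $x_*$ is right transitive, $\mathtt H$ occurs in $x_*$, say on $[m,m+|\mathtt H|)$; the copies of $\mathtt p$ and $\mathtt q$ occupy windows $I_1=[m,m+|w_0|)$ and $I_2=[m_2,m_2+|w_0|)$ with $m_2=m+|\mathtt H|-|w_0|$, over which $y$ reads $w_0$ (because $x_*$ reads a preimage of $w_0$ there).

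The core step is to ``drag $\bar X$‑behaviour'' to the two ends of this copy of $\mathtt H$. Since $\bar x$ and $x_*$ both lie in $C$ and $y$ is right transitive, Lemma~\ref{lem:same_member} gives, over each of $I_1,I_2$, a single symbol of $M_0$ through which every point of $C$ is routable at the corresponding coordinate ($m+n_0$, resp.\ $m_2+n_0$); so at $I_1$ I would replace $x_*|_{I_1}$ and $\bar x|_{I_1}$ by preimages of $w_0$ with the same endpoints that agree at that coordinate (Definition~\ref{defn:TB}), and --- $X$ being one‑step --- splice ``$\bar x$ on the left of $m+n_0$, $x_*$ on the right'' to obtain a point $\rho\in\piinv(y)$ that agrees with $\bar x$ on $(-\infty,m-1]$, agrees with $x_*$ on $[m+|w_0|,\infty)$ (whence $\rho\in C$, since two preimages of $y$ that agree along coordinates tending to $+\infty$ are equivalent), and still carries $\bar v$ and the $\mathtt q$‑window inside $(m-1,m_2+|w_0|)$. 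Running the same surgery on $\rho$ and $\bar x$ over $I_2$ produces $\eta\in\piinv(y)$ agreeing with $\bar x$ on $(-\infty,m-1]$ \emph{and} on $[m_2+|w_0|,\infty)$, with $\bar v$ still occurring inside.

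To finish I would take $v:=\eta|_{[m-1,\,m_2+|w_0|]}$ and $u:=\bar x|_{[m-1,\,m_2+|w_0|]}$: both project to $y|_{[m-1,m_2+|w_0|]}$, $u\in\B(\bar X)$, $\bar v$ is a subblock of $v$, and $u,v$ share their first symbol ($\eta_{m-1}=\bar x_{m-1}$) and their last symbol ($\eta_{m_2+|w_0|}=\bar x_{m_2+|w_0|}$), so $(u,v)$ is an $(\bar X,\bar v)$‑diamond with $|v|=|\mathtt H|+2\le L+2|w_0|+2R+2<L+N$. The step I expect to be the main obstacle is the surgery paragraph: the reason one must use a minimal transition block instead of merely Lemma~\ref{lem:modified_yoo} is that the common routing symbol of $C$ is available at \emph{every} occurrence of $w_0$ in $y$, so the merging of $\bar x$ with $x_*$ can be forced inside the bounded windows $I_1,I_2$ --- not at some occurrence whose distance from $\bar v$ one cannot control --- and the delicate bookkeeping is to verify that each splice yields a genuine point of $X$ with $\pi$‑image $y$, that $\bar v$ stays exactly where it was, and that $\rho$ remains in $C$ so that Lemma~\ref{lem:same_member} can legitimately be applied a second time.
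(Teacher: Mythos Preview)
Your proposal is correct and follows essentially the same route as the paper: fix once and for all a minimal transition block $(w_0,n_0,M_0)$, a bridging constant, a right transitive $y$, a point $\bar x\in\bar X$ over $y$, and (via Theorem~\ref{thm:contain_transition_pt}) a right transitive $x_*\sim\bar x$; then, given $\bar v$, sandwich it between two copies of a preimage of $w_0$, locate this block in $x_*$, and use the fact that $x_*$ and $\bar x$ lie in the same class (hence route through the same symbol of $M_0$ at each occurrence of $w_0$, by Lemma~\ref{lem:same_member}) to splice the ends over to $\bar x$, producing the diamond with overhead bounded by $2|w_0|$ plus twice the bridging constant. The only cosmetic differences are that the paper chooses the collar preimage $u$ of $w_0$ to lie in $\B(\bar X)$ (which is harmless but unnecessary) and carries out the two splices in a single explicit block rather than in two steps via an intermediate point $\rho$.
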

\begin{proof}

    Let $k$ be a positive integer such that for any blocks $u,v$ in $\mathcal{B}(X)$ there is a block $w$ in $\mathcal{B}(X)$ with $|w|\le k$ and $uwv$ in $\mathcal{B}(X)$. Such $k$ exists since $X$ is irreducible and of finite type. Let $(w,n,M)$ be a minimal transition block of $\pi$ with $|w|=l$ and $u$ a preimage of $w$ in $\mathcal{B}(\bar X)$.

    Consider a block $\bar v$ in $\mathcal{B}(X)\setminus\mathcal{B}(\bar X)$, and let $\gamma=u\alpha\bar v\beta u$ for some $\alpha$ and $\beta$ with $|\alpha|,|\beta|\le k$. Denote $|\gamma|=L$. Let $y$ be a right transitive point of $Y$ and $\bar x$ a preimage of $y$ in $\bar X$. By Theorem \ref{thm:contain_transition_pt} there is a right transitive preimage $x$ of $y$ in $X$ which is equivalent to $\bar x$. Note that $x\ne\bar{x}$. For convenience, let $x_{[0,L)}=\gamma$. Note that $x$ and $\bar x$ are both routable through the same symbol of $M$, say $a$, at time $n$, and at time $L-l+n$. Let $\delta$ be a block of length $l$ in $\mathcal{B}(X)$ such that $\delta_0=u_0,\delta_{l-1}=u_{l-1}$ and $\delta_n=a$. Also let $\bar\delta$ and $\bar\delta'$ be blocks of length $l$ in $\mathcal{B}(X)$ such that $\bar\delta_0=\bar x_0,\bar\delta_{l-1}=\bar x_{l-1},\bar\delta_n=a$ and $\bar\delta'_0=\bar x_{L-l},\bar\delta'_{l-1}=\bar x_{L-1},\bar\delta'_n=a$. Then the two blocks $\bar\delta_{[0,n)}\delta_{[n,l)}x_{
[l,L-l)}\delta_{[0,n)}\bar\delta'_{[n,l)}$ and $\bar x_{[0,L]}$ form an $(\bar X,\bar v)$-diamond of
length smaller than or equal to $|\bar v|+2l+2k$. Letting $N=2l+2k$ completes the proof.
\end{proof}

In the case of finite-to-one factor codes $\pi = \pi_2 \circ \pi_1$, we have $c_\pi=d_\pi = d_{\pi_1} \cdot d_{\pi_2}=c_{\pi_1} \cdot c_{\pi_2}$. Since class degree is a conjugacy invariant generalization of degree, it is natural to consider whether this equality holds for the infinite-to-one case. The following example shows that it actually does not; however we are still able to get an inequality as says Proposition \ref{prop:inequality}.

\begin{ex}\label{ex:multiplication_fails}
    Let $X$ be the full 2-shift and $Y = \{ 0^\infty \}$ and consider the trivial map $\pi : X \to Y$. By letting $\pi_2 = \pi$ and $\pi_1 : X \to X$ by $\pi_1(x)_i = x_i + x_{i+1} \mod 2$, we have $\pi = \pi_2 \circ \pi_1$. However, $1 = c_\pi < c_{\pi_1} \cdot c_{\pi_2} = 2$.

\end{ex}

\begin{prop}\label{prop:inequality}
  Let $(X,Y,\pi_1)$ and $(Y,Z,\pi_2)$ be irreducible factor triples. If $\pi = \pi_2 \circ \pi_1$, then $c_{\pi} \le c_{\pi_1} \cdot c_{\pi_2}$.
\end{prop}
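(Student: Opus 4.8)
The plan is to fix a doubly transitive point $z\in Z$ (one exists since $Z$ is irreducible) and to bound $|\C(z)|$, the number of $\pi$-transition classes over $z$, since $c_\pi=|\C(z)|$ by Theorem~\ref{thm:classdegree}. After recoding --- the class degree is a conjugacy invariant --- I may assume $X$ and $Y$ are one-step and that $\pi_1,\pi_2$, hence $\pi$, are one-block; this simultaneous recoding is routine but should be carried out carefully. Throughout I write $\sim_{\pi_i}$, $\to_{\pi_i}$ for the transition relations of $\pi_i$. Two easy observations will be used repeatedly: a $\pi_1$-transition $x\to_{\pi_1}\bar x$ is automatically a $\pi$-transition (apply $\pi_2$ to the connecting point), so $x\sim_{\pi_1}\bar x\Rightarrow x\sim_\pi\bar x$; and a $\pi$-transition $x\to_\pi\bar x$ projects under $\pi_1$ to a $\pi_2$-transition $\pi_1(x)\to_{\pi_2}\pi_1(\bar x)$ (here one uses that $\pi_1$ is one-block), so $x\sim_\pi\bar x\Rightarrow\pi_1(x)\sim_{\pi_2}\pi_1(\bar x)$.

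Next, for each class $C\in\C(z)$ I would choose, by Corollary~\ref{cor:contain_dbly_transition_pt}, a doubly transitive point $x_C\in C$; then $y_C:=\pi_1(x_C)$ is a doubly transitive point of $Y$ lying in $\pi_2^{-1}(z)$, and the second observation shows that $C\mapsto[y_C]_{\pi_2}$ is a well-defined map $\Phi$ from $\C(z)$ to the set of $\pi_2$-transition classes over $z$, whose image has at most $c_{\pi_2}$ elements by Theorem~\ref{thm:classdegree}. Since the fibres of $\Phi$ partition $\C(z)$, it suffices to bound each nonempty fibre $\Phi^{-1}(\Gamma)$ by $c_{\pi_1}$. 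For this I fix a doubly transitive $\bar y\in\Gamma$ (Corollary~\ref{cor:contain_dbly_transition_pt} applied to $\pi_2$). The crux is a \emph{lifting lemma}: \emph{if $y,\bar y\in\pi_2^{-1}(z)$ are doubly transitive with $y\sim_{\pi_2}\bar y$, then every $x\in\pi_1^{-1}(y)$ is $\pi$-equivalent to some $\bar x\in\pi_1^{-1}(\bar y)$.} Granting it, for $C\in\Phi^{-1}(\Gamma)$ we have $y_C\sim_{\pi_2}\bar y$, so the lemma yields $\bar x_C\in C\cap\pi_1^{-1}(\bar y)$; then $C\mapsto[\bar x_C]_{\pi_1}$ into the (size $c_{\pi_1}$, by Theorem~\ref{thm:classdegree}) set of $\pi_1$-transition classes over $\bar y$ is injective, since $\bar x_C\sim_{\pi_1}\bar x_{C'}\Rightarrow\bar x_C\sim_\pi\bar x_{C'}\Rightarrow C=C'$. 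Hence $|\Phi^{-1}(\Gamma)|\le c_{\pi_1}$, and summing over the at most $c_{\pi_2}$ classes in the image of $\Phi$ gives $c_\pi\le c_{\pi_1}c_{\pi_2}$.

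For the lifting lemma I would argue as follows. Applying Corollary~\ref{cor:transition_equals_equivalence} to $\pi_2$ (valid since $z$ is right transitive), $y\sim_{\pi_2}\bar y$ produces $w\in\pi_2^{-1}(z)$ with $w_{(-\infty,N_0]}=y_{(-\infty,N_0]}$ and $w_{[N_1,\infty)}=\bar y_{[N_1,\infty)}$ for some $N_0<N_1$. Let $w_1$ be a magic block of $\pi_1$ with magic coordinate $n_1$ and $A_1=\{u_{n_1}:u\in\pi_1^{-1}(w_1)\}$. The key input is the minimality of $d(w_1)$: for any point $\eta$ of $Y$ with $\eta_{[0,|w_1|)}=w_1$ one has $\{x'_{n_1}:x'\in\pi_1^{-1}(\eta)\}=A_1$, so every $\alpha\in A_1$ is simultaneously right-extendable over $\eta_{[n_1,\infty)}$ and left-extendable over $\eta_{(-\infty,n_1]}$. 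Using that $y$ is left transitive, choose an occurrence of $w_1$ in $y$ at $[j,j+|w_1|)$ with $j+|w_1|\le N_0$, and set $\alpha:=x_{j+n_1}\in A_1$. Since $w$ agrees with $y$ on $(-\infty,N_0]$, the segment $x_{(-\infty,j+n_1]}$ projects onto $w_{(-\infty,j+n_1]}$ and ends at $\alpha$, while $\sigma^j(w)$ begins with $w_1$ and so has a preimage hitting $\alpha$ at coordinate $n_1$; splicing these at coordinate $j+n_1$ produces $\hat x\in\pi_1^{-1}(w)$ with $\hat x_{(-\infty,j+n_1]}=x_{(-\infty,j+n_1]}$, i.e.\ $\hat x$ left asymptotic to $x$. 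Symmetrically, using that $\bar y$ is right transitive, choose an occurrence of $w_1$ in $\bar y$ at $[k,k+|w_1|)$ with $k\ge N_1$; then $\beta:=\hat x_{k+n_1}\in A_1$ is left-extendable over $\bar y_{(-\infty,k+n_1]}$, and splicing a preimage of $\bar y_{(-\infty,k+n_1]}$ ending at $\beta$ to $\hat x_{(k+n_1,\infty)}$ yields $\bar x\in\pi_1^{-1}(\bar y)$ right asymptotic to $\hat x$. As $\pi(\hat x)=\pi(x)=\pi(\bar x)=z$ and $z$ is right transitive, $\hat x$ being left asymptotic to $x$ and right asymptotic to $\bar x$ forces $x\sim_\pi\bar x$ by Corollary~\ref{cor:transition_equals_equivalence}.

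The main obstacle is exactly this lifting lemma: a transition realized in the intermediate shift $Y$ does not obviously lift, because one cannot in general continue a partial $X$-orbit so that its $\pi_1$-image follows a prescribed $Y$-sequence. Two features rescue it --- working with doubly transitive representatives, so that a magic block of $\pi_1$ is available in each of the two tails where a splice is required, and the minimality of $d(w_1)$, which at a magic coordinate makes every symbol of $A_1$ both left- and right-extendable, legitimizing the splices. The remaining points to check are the simultaneous recoding mentioned above and the verification that $(X,Y,\pi_1)$, $(Y,Z,\pi_2)$ and $(X,Z,\pi)$ are all irreducible factor triples so that the cited corollaries apply.
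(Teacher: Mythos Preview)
Your proof is correct and follows essentially the same route as the paper: fix a doubly transitive $z\in Z$, group the $\pi$-classes over $z$ according to the $\pi_2$-class of a doubly transitive representative's $\pi_1$-image, and for each such group bound its size by $c_{\pi_1}$ via a ``lifting'' argument that splices a given preimage onto a bridge point between two $\pi_2$-equivalent doubly transitive points of $Y$.

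The only notable difference is in the tool used for the splices. The paper carries out the lifting using a \emph{minimal transition block} $(w,n,M)$ of $\pi_1$ together with Lemma~\ref{lem:unique_routability} and Theorem~\ref{thm:mutually_separated} to pin down the unique $x^{(i)}$ through which the lift is routed; you instead use a \emph{magic block} of $\pi_1$ and the extendability property at a magic coordinate, and you do not need to single out a specific target representative --- injectivity of $C\mapsto[\bar x_C]_{\pi_1}$ does the counting. Your version is thus slightly more elementary, relying only on the classical magic-block machinery rather than the paper's own transition-block apparatus, while the paper's version makes the matching with the fixed representatives $x^{(1)},\dots,x^{(c_1)}$ explicit. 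Both arguments hinge on exactly the same obstacle you identified: lifting a $\pi_2$-transition realized at the $Y$-level to the $X$-level, resolved by exploiting double transitivity to place the requisite block in each tail.
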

\begin{proof}
    Since class degree is invariant under conjugacy, we may assume $X$ and $Y$ are one-step, and $\pi_1$ and $\pi_2$ (hence $\pi$) are one-block. For convenience, rename $c_1=c_{\pi_1}$ and $c_2=c_{\pi_2}$. Fix a doubly transitive point $z$ in $Z$ and let $C$ be a transition class over $z$ with respect to $\pi_2$. By Corollary \ref{cor:contain_dbly_transition_pt}, $C$ contains a doubly transitive point $y$. Moreover, by the same corollary there are $c_1$ doubly transitive points $x^{(1)},\cdots,x^{(c_1)}$ in $\pi_1^{-1}(y)$ which are not equivalent to each other with respect to $\pi_1$.

    We claim that any doubly transitive point $x'$ in $\pi_1^{-1}(C)$ is equivalent to some $x^{(i)},1\le i\le c_1$. To show the claim, observe that $y'=\pi_1(x')$ lies in $C$ and is equivalent to $y$ with respect to $\pi_2$. It follows that there is a point $\vec y$ in $Y$ such that $\pi_2(\vec y)=z$ and $\vec y_{(-\infty,0]}=y_{(-\infty,0]},\vec y_{[j,\infty)}=y'_{[j,\infty)}$ for some $j>0$.

    Since $y'$ is doubly transitive, a minimal transition block $(w,n,M)$ of $\pi_1$ occurs in $y'_{[k,k+|w|)}=\vec{y}_{[k,k+|w|)}=w$ for some $k\ge j$. Let the point $x'$ be routable through a symbol $a\in M$ at time $k+n$. There is a point $\vec{x}$ in $\pi_1^{-1}(\vec{y})$ which is also routable through $a$ at time $k+n$. Reset $\vec{x}_{[k,\infty)}$ to have $\vec{x}_{k+n}=a$ and $\vec{x}_t=x'_t$ for all $t\ge k+|w|$.

    Since $y$ is doubly transitive, the minimal transition block $(w,n,M)$ also occurs in $y_{[-l,-l+w|)}=\vec{y}_{[-l,-l+|w|)}=w$ for some $l\ge|w|$. Let the point $\vec x$ be routable through a symbol $b\in M$ at time $-l+n$. By Lemma \ref{lem:unique_routability} and Theorem \ref{thm:mutually_separated} there is exactly one $x^{(i)}$ among $x^{(1)},\cdots,x^{(c_1)}$ which is routable through $b$ at time $-l+n$. Reset $\vec{x}_{(-\infty,-l+|w|)}$ to have $\vec{x}_{-l+n}=b$ and $\vec{x}_t=x^{(i)}_t$ for all $t\le -l$. Therefore we have $\vec{x}$ left asymptotic to $x^{(i)}$, right asymptotic to $x'$, and $\pi(\vec{x})=\pi_2(\vec{y})=z$. Corollary \ref{cor:transition_equals_equivalence} implies that $x'\sim x^{(i)}$ with respect to $\pi$. It follows that $\pi_1^{-1}(C)$ contains at most $c_1$ doubly transitive $\pi$-preimages of $z$ which are not equivalent to each other with respect to $\pi$.

    Now let $C_1,C_2,\cdots,C_{c_2}$ be all the transition classes over $z$ with respect to $\pi_2$. Note that by Corollary \ref{cor:contain_dbly_transition_pt} the class degree of $\pi$ is the maximal number of doubly transitive points in $\pi^{-1}(z)=\bigcup_{j=1}^{c_2}\pi_1^{-1}(C_j)$ which are not equivalent to each other with respect to $\pi$. By the above argument, each $\pi_1^{-1}(C_j)$ contains at most $c_1$ doubly transitive points which are not equivalent to each other with respect to $\pi$. Therefore we have $c_\pi\le c_1c_2$.
\end{proof}

We finish with the following question, which can be regarded as a measure-theoretical version of Theorem \ref{thm:contain_transition_pt}.

\begin{que}
    Let $(X,Y,\pi)$ be an irreducible factor triple and let $\nu$ be an ergodic measure on $Y$. Given a right transitive point $y\in Y$ which is $\nu$-generic, does each transition class over $y$ contain a generic point of a measure of relative maximal entropy over $\nu$?
\end{que}

Note that the class $C \in \C(y)$ may not contain generic points for \emph{different} measures of relative maximal entropy over $\nu$. For example consider the factor code $\pi_1$ on the full 2-shift in Example \ref{ex:multiplication_fails}. Then $c_{\pi_1} = 2$ and $\pi_1$ maps the $(1/3,2/3)$ and $(2/3,1/3)$-Bernoulli measures to the same measure on $X$. However each transition class over a point in $X$ is a singleton.

\vspace{0.1cm}
\begin{ack*}
The first author was supported by Fondecyt project 3120137, the second author was supported by Fondecyt project 3130718, and the third author was supported by Basic Science Research Program through the National Research Foundation of Korea(NRF) funded by the Ministry of Education (2012R1A1A2006874).
The authors would like to thank Michael Schraudner and the referee for helpful comments.
\end{ack*}

\bibliographystyle{abbrv}
	\bibliography{BiblioMahsa}

\end{document}